\documentclass[11pt,twoside]{amsart}
\usepackage{color}
\usepackage{hyperref}
\usepackage[latin1]{inputenc}
\usepackage[OT1]{fontenc}
\usepackage{amsmath}
\usepackage{amsthm}
\usepackage{amssymb}
\usepackage[all]{xy}
\usepackage{xy}
\usepackage{ifthen} 
\usepackage{hyperref} 
\usepackage{graphicx}
\usepackage{paralist}
\usepackage{stmaryrd}

\newtheorem{theorem}{Theorem}[section]

\newtheorem{proposition}[theorem]{Proposition}

\newtheorem{lemma}[theorem]{Lemma}

\numberwithin{equation}{section}

\theoremstyle{definition}
\newtheorem{definition}[theorem]{Definition}
\newtheorem{remark}[theorem]{Remark}

\newtheorem{assumption}[theorem]{Assumption}

\newcommand{\CC}{\mathbb{C}}

\newcommand{\NN}{\mathbb{N}}
\newcommand{\PP}{\mathbb{P}}

\newcommand{\RR}{\mathbb{R}}
\newcommand{\ZZ}{\mathbb{Z}}

\renewcommand{\to}{\xymatrix@1@=15pt{\ar[r]&}}
\renewcommand{\rightarrow}{\xymatrix@1@=15pt{\ar[r]&}}
\renewcommand{\mapsto}{\xymatrix@1@=15pt{\ar@{|->}[r]&}}
\renewcommand{\twoheadrightarrow}{\xymatrix@1@=15pt{\ar@{->>}[r]&}}
\renewcommand{\hookrightarrow}{\xymatrix@1@=15pt{\ar@{^(->}[r]&}}
\newcommand{\congpf}{\xymatrix@1@=15pt{\ar[r]^-\sim&}}

\begin{document}

\newboolean{xlabels} 
\newcommand{\xlabel}[1]{ 
                        \label{#1} 
                        \ifthenelse{\boolean{xlabels}} 
                                   {\marginpar[\hfill{\tiny #1}]{{\tiny #1}}} 
                                   {} 
                       } 
\setboolean{xlabels}{false} 

\title[Properties of dynamical degrees]{Some properties of dynamical degrees with a view towards cubic fourfolds}

\author[B\"ohning]{Christian B\"ohning$^1$}
\address{Christian B\"ohning, Mathematics Institute, University of Warwick\\
Coventry CV4 7AL, England}
\email{C.Boehning@warwick.ac.uk}

\author[Bothmer]{Hans-Christian Graf von Bothmer}
\address{Hans-Christian Graf von Bothmer, Fachbereich Mathematik der Universit\"at Hamburg\\
Bundesstra\ss e 55\\
20146 Hamburg, Germany}
\email{hans.christian.v.bothmer@uni-hamburg.de}

\author[Sosna]{Pawel Sosna$^2$}
\address{Pawel Sosna, Fachbereich Mathematik der Universit\"at Hamburg\\
Bundesstra\ss e 55\\
20146 Hamburg, Germany}
\email{pawel.sosna@math.uni-hamburg.de}

\thanks{$^1$ Supported by Heisenberg-Stipendium BO 3699/1-2 of the DFG (German Research Foundation) during the initial stages of this work}
\thanks{$^2$ Partially supported by the RTG 1670 of the  DFG (German Research Foundation)}

\dedicatory{This article is dedicated to Fedor Bogomolov on the occasion of his seventieth birthday.}

\begin{abstract}
Dynamical degrees and spectra can serve to distinguish birational automorphism groups of varieties in quantitative, as opposed to only qualitative, ways. We introduce and discuss some properties of those degrees and the Cremona degrees, which facilitate computing or deriving inequalities for them in concrete cases: (generalized) lower semi-continuity, sub-multiplicativity, and an analogue of Picard-Manin/Zariski-Riemann spaces for higher codimension cycles. We also specialize to cubic fourfolds and show that under certain genericity assumptions the first and second dynamical degrees of a composition of reflections in points on the cubic coincide.
\end{abstract}

\maketitle

\section{Introduction}\xlabel{sMotivation}

For a smooth projective variety $X$ of dimension $n$ and a birational self-map $f \colon X \dasharrow X$ one can define a tuple of real numbers
\[
\lambda_0 (f) = 1 ,\; \lambda_2 (f), \dots , \lambda_{n-1}(f), \; \lambda_n (f) =1,
\]
where $\lambda_i(f)\geq 1$ for $2\leq i\leq n-1$, called the dynamical degrees of $f$. See Section \ref{sPrelim} for two equivalent definitions of these numbers. The dynamical degrees turn out to be invariant under birational conjugacy, so that the dynamical spectrum
\[
\Lambda (X) = \{ \lambda (f) :=(\lambda_1 (f), \dots , \lambda_{n-1}(f)) \mid f \in \mathrm{Bir}(X) \} \subset \RR^{n-1}
\]
is a birational invariant of the variety $X$. One thus might, for example, try to use it to distinguish very general (conjecturally irrational) cubic fourfolds $X$ from $\PP^4$. Here are some ideas how the spectra might differ:
\begin{enumerate}
\item
As point sets, that is, there might be a tuple $\lambda (f)$ in the spectrum of $\PP^4$ which is not in the spectrum of $X$. This could for example be proven if one could show that on $X$ the dynamical degrees have to satisfy other additional inequalities, coming from the geometry of $X$, which can be violated on $\PP^4$. For this one has, in particular, to develop certain semi-continuity properties and computational tools for dynamical degrees, which we start doing in the subsequent sections.
\item
As metric (or only topological) spaces: for example, one can consider, for every $k$, the smallest gap in the spectrum after $1$, if there is any, that is,
\[
g_k:=\inf_{f \in \mathrm{Bir}(X), \; \lambda_k (f) \neq 1} (\lambda_k(f) -1).
\]
It is possible that these numbers differ for $X$ and $\PP^4$, but the drawback is that it is not easy to see how to relate them to accessible geometric features of $X$. In other words, it seems very hard to compute or estimate them.
On the topological side, it may happen that the Cantor-Bendixson ranks of the dynamical spectra or some linear slices in them differ, but again these are very hard to access.
\item
Arithmetically: it could happen that for both $X$ and $\PP^4$, the dynamical degrees are algebraic integers, but for $X$ they may satisfy some additional arithmetic constraints. For example, the number fields generated by each tuple of dynamical degrees on $X$ might differ from the ones for $\PP^4$. But also this seems very hard to detect.
\end{enumerate}

We thus concentrate on (1) for the moment.

The paper is organised as follows. In Section \ref{sPrelim} we recall some basic facts about cycles and dynamical degrees. In Section \ref{sSemiCont} we prove Theorem \ref{tSemiContinuity} which says that the dynamical degrees are lower-semicontinuous functions for families over \emph{smooth} bases, if one understands lower semi-continuity in a slightly generalized sense, namely that (downward) jumps may occur on countable unions of closed subsets. Moreover, in Proposition \ref{pCubicFourfolds} we collect some results that point out the importance of the questions of existence of (finite-dimensional, connected, non-trivial) algebraic subgroups in $\mathrm{Bir}(X)$ and of the classification of birational automorphisms with all dynamical degrees equal to $1$ for the irrationality problem for cubic fourfolds $X$. 
In Section \ref{sUpperDeg} we study the relationship between dynamical degrees and Cremona degrees under some assumptions and prove a sub-multiplicativity result, Theorem \ref{tSubMultiplicative}. Section \ref{sItTrans} is devoted to reflections on cubic fourfolds and in Theorem \ref{tGeneralPointsEquality} we show, under some assumptions, the equality of the first and second dynamical degrees of a composition of reflections on a smooth cubic fourfold. This is a sample of a type of result which says that special dynamical degrees can only arise in  the presence of special dynamics. Such implications are very important for making progress on the irrationality problem for cubics, too.
Finally, in the Appendix we describe generalized Picard-Manin spaces which might prove useful in the study of dynamical degrees on fourfolds in the future.  
\smallskip

\noindent\textbf{Conventions.} We work over the field of complex numbers $\mathbb{C}$ throughout the paper. A variety is a reduced and irreducible scheme of finite type over $\CC$. A prime $k$-cycle on a variety is an (irreducible) subvariety of dimension $k$. A $k$-cycle is a formal linear combination of prime $k$-cycles. If $f\colon X\dasharrow Y$ is a rational map between varieties, we denote by $\mathrm{dom}(f)$ the largest open subset of $X$ on which $f$ is a morphism. The graph $\Gamma_f \subset X \times Y$ of $f$ is the closure of the locus of points $(x, f(x))$ with $x \in \mathrm{dom}(f)$.
\smallskip

\noindent\textbf{Acknowledgments.} We would like to thank Miles Reid for useful discussions.

\section{Preliminaries}\xlabel{sPrelim}

\subsection{Cycles}
Let $X$ be a smooth projective variety of dimension $n$. Following \cite{Ful98}, we will denote by $A_k(X)$, $B_k(X)$ and $H_k(X)$ the groups of algebraic cycles of dimension $k$ modulo rational, algebraic and homological equivalence, respectively. Roughly speaking, a cycle is rationally equivalent to zero if it can be written as the difference of two fibres of a family over $\mathbb{P}^1$; algebraically equivalent to zero if it can be written as the difference of two fibres of a family over a smooth curve; and homologically equivalent to zero if it maps to zero under the cycle map $A_k(X)\to H^{2n-2k}(X,\mathbb{Z})$. 

Any zero cycle $\alpha\in A_0(X)$ has a well-defined degree, see \cite[Def.\ 1.4]{Ful98}. 

As usual, $A^k(X)$, $B^k(X)$, $H^k(X)$ will denote the groups of cycles of codimension $k$. There are surjections
\[A^k(X)\to B^k(X)\to H^k(X)\]
for all possible $k$.

Given a proper map $f\colon X\to Y$ of some relative dimension $l$, one can define pushforward maps $f_*\colon A_k(X)\to A_k(Y)$ and, if $f$ is also flat, pullback maps $f^*\colon A_k(Y)\to A_{k+l}(X)$, see \cite[Sect.\ 1]{Ful98}. For instance, for the pullback under a flat map one simply takes the class of the pre-image of a prime cycle and extends this definition linearly. One can also define pull-back maps for morphisms between smooth varieties which are not necessarily flat, but this is slightly more complicated and involves some intersection theory, see \cite[Ch.\ 8]{Ful98}.

Also recall that the collection of all groups $A_*(X)=\oplus_{k=0}^n A_k(X)$ is a commutative ring with respect to the intersection pairing (here we need $X$ smooth). More precisely, there are pairings $A_k(X)\times A_l(X)\to A_{k+l-n}(X)$ for all possible $k,l$. In other words, $A^*(X)$ is a graded ring.

An important result about cycles which we will use below is the ``Principle of Conservation of Number'', see \cite[Subsect.\ 10.2]{Ful98}.

\begin{theorem}\xlabel{tConsNumber}
Let $p\colon \mathcal{Y} \to T$ be a proper morphism of varieties, $\dim T=m$, and $\alpha$ be an $m$-cycle on $\mathcal{Y}$. Then the cycle classes $\alpha_t \in A_0 (\mathcal{Y}_t)$ for $t \in T$ a \emph{regular} closed point, all have the same degree. 
\end{theorem}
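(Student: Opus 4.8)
The plan is to reduce the statement to two standard facts: the compatibility of refined Gysin homomorphisms with proper pushforward, and the triviality of the top-dimensional cycle group of a variety. First, a remark on the hypothesis. Since $t$ is a \emph{regular} closed point, the maximal ideal of $\mathcal{O}_{T,t}$ is generated by a regular sequence of length $m$, so the inclusion $i_t\colon\{t\}\hookrightarrow T$ is a regular embedding of codimension $m$, and $\alpha_t$ is by definition the refined Gysin pullback $i_t^{!}\alpha$, which lands in $A_0(\{t\}\times_T\mathcal{Y})=A_0(\mathcal{Y}_t)$ precisely because $\dim T=m$. The goal is to show that $\deg(\alpha_t)$ equals ``the degree of $\alpha$ over $T$'', independently of $t$.

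First I would establish the identity $\deg(\alpha_t)=\deg\big(i_t^{!}(p_*\alpha)\big)$. On the one hand, $\alpha_t$ is a $0$-cycle class on the complete scheme $\mathcal{Y}_t$, so its degree is unchanged under the proper pushforward $p_t\colon\mathcal{Y}_t\longrightarrow\{t\}$ induced by $p$; thus $\deg(\alpha_t)=\deg\big((p_t)_*\alpha_t\big)$. On the other hand, viewing $p$ as a proper morphism over $T$ and base-changing it along $i_t$, the compatibility of refined Gysin maps with proper pushforward (\cite[Thm.\ 6.2]{Ful98}) gives $(p_t)_*\big(i_t^{!}\alpha\big)=i_t^{!}\big(p_*\alpha\big)$ in $A_0(\{t\})$. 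Combining the two equalities proves the claim.

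Next I would compute the right-hand side. Since $p$ is proper, $p_*\alpha$ is an $m$-cycle on $T$; as $T$ is a variety of dimension $m$, its only $m$-dimensional subvariety is $T$ itself, so $p_*\alpha=d\,[T]$ for a single integer $d$ (which is $0$ precisely when no component of $\alpha$ dominates $T$). Finally, since $i_t$ is a regular embedding and the fibre product $\{t\}\times_T T$ is just the reduced point $\{t\}$ — there is no excess — we have $i_t^{!}[T]=[t]$, and hence
\[
\deg(\alpha_t)=\deg\big(i_t^{!}(p_*\alpha)\big)=\deg\big(d\cdot i_t^{!}[T]\big)=d\cdot\deg[t]=d ,
\]
which is manifestly independent of the regular closed point $t$.

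I do not expect a genuine obstacle: the proof is essentially a short computation once the right general results are invoked, and the only place the hypothesis is essential is the regularity of $t$, which is exactly what makes $i_t$ a regular embedding of the expected codimension $m$ (so that $i_t^{!}$ is defined with the right target and satisfies $i_t^{!}[T]=[t]$). If one preferred to avoid the refined intersection theory of \cite[Ch.\ 6]{Ful98}, a natural alternative would be to cut $T$ by general hyperplanes through two prescribed regular points, invoking Bertini to keep the sections smooth and irreducible, so as to reduce to the case where $T$ is a smooth curve — where $\{t\}$ is a Cartier divisor and the projection formula settles it directly; the delicate point in that approach would be verifying that the operation $\alpha\mapsto\alpha_t$ is compatible with such a base change.
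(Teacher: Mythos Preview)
Your argument is correct and is essentially the standard proof of the Principle of Conservation of Number as given in \cite[Subsect.\ 10.2]{Ful98}. Note that the paper does not supply its own proof of this theorem at all---it is quoted as a known result from Fulton---so there is nothing further to compare.
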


\subsection{Dynamical degrees}

Let $X$ be a smooth projective variety of dimension $n$, fix an ample divisor $H$ on $X$ and let $f\colon X \dasharrow X$ be a birational map. Setting $H^k_\mathbb{R}(X):=H^k(X)\otimes \mathbb{R}$, we define a linear map 
\[f^*\colon H^k_\mathbb{R}(X) \to H^k_\mathbb{R}(X),\quad \alpha\mapsto \mathrm{pr}_{1*}(\Gamma_f . \mathrm{pr}^*_2(\alpha)),\]
where $\mathrm{pr}_i\colon X\times X\to X$ are the projections and $\Gamma_f\subset X\times X$ is the graph of $X$.

\begin{definition}
Let $X$ and $f$ be as above. The \emph{$k$-th dynamical degree} of $f$ is 
\[\lambda_k (f) = \lim_{m\to \infty} ( \varrho ((f^m)^*))^{\frac{1}{m}},\]
where $\rho((f^m)^*)$ is the spectral radius of the linear map $(f^m)^*$.
\end{definition}

Equivalently, one can choose any resolution $Z$ of singularities of $\Gamma_f$, consider the induced diagram
\[
\xymatrix{
 & Z\ar[ld]_{\pi_1}\ar[rd]^{\pi_2} & \\
Y \ar@{-->}[rr]^f & & Y
}
\]
and define $f^* = \pi_{1*}\circ \pi_2^*$. For the equivalence of these two approaches, see \cite[Subsect.\ 3.1]{Truo15}.

There is yet another way of defining the dynamical degrees. First, one can define the \emph{Cremona degree} $\deg_k (f)$ of $f$ with respect to $H$ as the degree (that is, the intersection number with $H^{n-k}$) of the birational transform under $f^{-1}$ of a general element in the system of cycles homologically equivalent to $H^k$. In symbols, 
\[\deg_k(f)=H^{n-k} .  f^* H^k.\] 
One then puts
\[
\lambda_k (f) = \lim_{m \to \infty} \deg_k (f^m)^{\frac{1}{m}},
\]
the growth rate of the Cremona degrees of the iterates of $f$. This definition does not depend on the choice of $H$ and is equivalent to the previous one, see \cite[Thm.\ 1.1]{Truo15} for a statement valid over any algebraically closed field of characteristic zero or \cite[Thm.\ 2.4]{Guedj10} for a more analytic statement.

\begin{remark}\xlabel{rLimLiminf}
Before the existence of the limit was proven, a definition involving a limes inferior of Cremona degrees or spectral radii was sometimes used in the literature.
\end{remark}

The dynamical degrees satisfy several numerical properties, such as log-concavity, and are related to the topological entropy of $f$, see \cite{Guedj10}. 

\section{Semicontinuity of dynamical degrees}\xlabel{sSemiCont}

In this section we will prove Theorem \ref{tSemiContinuity}, providing estimates on dynamical degrees in families.

\begin{theorem}\xlabel{tSemiContinuity}
Let 
\[
\xymatrix{
X \ar[d]_{\pi} \ar@{^{(}->}[r] & \PP^N \times S\ar[ld]^{\mathrm{pr}_S} \\
S & 
}
\]
be a smooth projective family of smooth varieties $X_s \subset \PP^N$ over a \emph{smooth} variety $S$. Let 
\[
\xymatrix{
X \ar[rd]_{\pi } \ar@{-->}[rr]^{f} &    & X \ar[ld]^{\pi} \\
      & S & 
}
\]
be a birational map such that no entire fiber of $\pi$ is in the indeterminacy or exceptional locus of $f$, i.e., fiberwise, $f_s \colon X_s \dasharrow X_s$ is a well-defined birational map. If this is not satisfied at the beginning, we can always achieve it by replacing $S$ by a non-empty open subset. Consider the function
\begin{gather*}
\lambda_j \colon S \to \RR
\end{gather*}
associating to $s\in S$ the $j$-th dynamical degree $\lambda_j (f_s)$ of $f_s$. Then $\lambda_j$ is lower semi-continuous in the following generalized sense: the sets
\[
V_a:=\{ s \in S \mid \lambda_j (s) \le a \} , \quad a \in \RR ,
\]
are countable unions of Zariski closed subsets in $S$.
\end{theorem}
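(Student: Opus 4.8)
The plan is to work with the Cremona-degree description of dynamical degrees and exploit the Principle of Conservation of Number (Theorem \ref{tConsNumber}). Fix an iterate $m$ and consider the $m$-th power $f^m \colon X \dashrightarrow X$ over $S$; by hypothesis the fiberwise map $(f^m)_s = (f_s)^m$ is a well-defined birational self-map of $X_s$ for all $s$ in a dense open subset, and after shrinking $S$ we may assume this holds everywhere. Let $H$ be the hyperplane class on $\PP^N$ restricted to the fibers. The quantity we want to control is $\deg_j((f_s)^m) = H_s^{n-j}\cdot (f_s^m)^* H_s^j$, computed on the fiber $X_s$. First I would construct, over $S$, a family version of the graph of $f^m$: take $\Gamma_{f^m}\subset X\times_S X$, resolve it relatively if necessary, and obtain $Z\to S$ with two projections $\pi_1,\pi_2\colon Z\to X$ over $S$. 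Then the cycle-theoretic definition $\deg_j(f_s^m) = \deg\big(\pi_{1,s*}(\pi_{2,s}^*H_s^j)\cdot H_s^{n-j}\big)$ expresses the Cremona degree as the degree of a $0$-cycle on the fiber $Z_s$ (or $X_s$) obtained by restricting a fixed cycle $\alpha^{(m)}$ on the total space.

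The key point is that $\deg_j(f_s^m)$, as a function of $s$, is \emph{constant} on the dense open set where everything is flat and the fibers behave generically, and can only \emph{jump} on a proper closed subset; moreover on that closed subset the same analysis applies recursively, so $\deg_j(f_s^m)$ is a constructible function that takes finitely many values on $S$, each value attained on a locally closed set, with the \emph{generic} value being the maximum. Concretely: the construction of $\Gamma_{f^m}$ and its resolution $Z$ can be carried out over a dense open $U\subseteq S$ so that $Z\to U$ is flat with irreducible fibers equal to (resolutions of) $\Gamma_{(f_s)^m}$; over $U$, the cycle $\alpha^{(m)} = [Z]\cdot \mathrm{pr}_1^*H^{n-j}\cdot \mathrm{pr}_2^*H^j$ (of relative dimension $0$) restricts fiberwise and, by Theorem \ref{tConsNumber}, its fiber degree is independent of $s\in U$. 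Call this generic value $d^{(m)}$. On the complementary closed set $S\setminus U$, which is again smooth after normalizing/stratifying, we repeat the argument with the restricted family, obtaining on each stratum a generic value, and so on; Noetherian induction terminates. The upshot is that for each $m$ there is a finite stratification of $S$ into locally closed subsets on which $\deg_j(f_s^m)$ is constant, and on the dense stratum this constant is the largest.

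From there the semicontinuity of the limit follows by a standard packaging argument. For each $m$ and each rational $c$, the set $\{s : \deg_j(f_s^m)\le c\}$ is constructible, hence a finite union of locally closed sets; but I actually want closed sets, so I instead note that $\{s: \deg_j(f_s^m) = d^{(m)}\}$ contains a dense open, and the ``bad'' locus where the degree drops is contained in a proper closed subset $Z_m\subsetneq S$. Then $\lambda_j(f_s)=\lim_m \deg_j(f_s^m)^{1/m}$ and, because on the complement of $\bigcup_m Z_m$ all the finite-level degrees agree with their generic values, $\lambda_j$ is constant (equal to its generic value $\lambda_j^{\mathrm{gen}}$) off a countable union of proper closed subsets. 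To get the precise statement about $V_a$: outside $\bigcup_m Z_m$ we have $\lambda_j(s)=\lambda_j^{\mathrm{gen}}$, so $V_a\cap(S\setminus\bigcup Z_m)$ is either empty or all of $S\setminus\bigcup Z_m$; in the first case $V_a\subseteq\bigcup_m Z_m$ is already a countable union of closed sets, and in the second case we stratify and induct on $\dim S$, adding at each stage only countably many closed sets. Since the process has countably many steps and each contributes countably many Zariski-closed pieces, $V_a$ is a countable union of closed subsets, as claimed.

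The main obstacle I anticipate is \emph{not} the semicontinuity bookkeeping but the geometric input that makes the Cremona degree upper-semicontinuous in the first place: one must ensure that the relative graph $\Gamma_{f^m}$ and its resolution can be spread out flatly over a dense open of $S$ so that its fibers really compute $\deg_j((f_s)^m)$, i.e.\ that forming the graph of the $m$-th iterate commutes with restriction to a general fiber. This requires care because indeterminacy loci of $f$ and its iterates can behave badly in special fibers; the hypothesis that no entire fiber lies in the indeterminacy or exceptional locus, together with generic flatness, is exactly what is needed to push this through, but verifying that $(f^m)_s = (f_s)^m$ as \emph{cycles} (not just as rational maps) on a general fiber — so that pushforward and pullback of the fixed total-space cycle commute with restriction — is the delicate step. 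Once that is in place, Theorem \ref{tConsNumber} does the rest mechanically.
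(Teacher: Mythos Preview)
Your overall architecture matches the paper's: reduce to lower semi-continuity of the Cremona degrees $s\mapsto \deg_j((f_s)^m)$ via Conservation of Number applied to the relative graph, then pass to the limit over $m$ and induct on $\dim S$. The paper packages the first step as Lemma~\ref{lSemiContDeg}.

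The genuine gap is exactly the point you flag at the end but do not resolve: you assert that on the stratification the \emph{generic value is the maximum}, i.e.\ that the Cremona degree can only drop at special $s$. Conservation of Number alone gives you that $\deg(\alpha^{(m)}_s)$ is constant; what you still need is that at every special point $z_0$ this constant is an \emph{upper bound} for $\deg_j((f_{z_0})^m)$. Equivalently, you must show that $\Gamma_{(f_{z_0})^m}$ occurs as a component of the fiber $(\Gamma_{f^m})_{z_0}$ and that the remaining components contribute nonnegatively to the intersection with $p_1^*H^{n-j}\cdot p_2^*H^j$. Over a higher-dimensional base this is not automatic: the special fiber of the total graph can have components of the wrong dimension, so there is no obvious positivity. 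The paper handles this by a curve trick: choose a smooth curve $C\subset S$ through $z_0$ meeting the generic locus, restrict the family to $C$, and use properness of the Chow scheme over the one-dimensional base to ensure that the limit $(\Gamma_{f^m_C})_{z_0}$ is an honest effective cycle of pure dimension $\dim X_s$. Then every component pairs nonnegatively with the pulled-back hyperplane classes, and since $\Gamma_{(f_{z_0})^m}$ is one of them, the inequality follows. Your proposal is missing precisely this argument.

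Two smaller remarks. First, resolving $\Gamma_{f^m}$ is unnecessary and in fact counterproductive: the paper works directly with the graph inside the smooth ambient $\PP^N\times\PP^N\times S$, where all the relevant intersections are defined. Second, for the passage to the limit the paper uses the set $\bigcup_{k_0\ge 1}\bigcap_{k\ge k_0} Z_k$ rather than your $\bigcup_m Z_m$; both lead to countable unions of closed sets, but the paper's choice is what interacts naturally with $\liminf$ (Remark~\ref{rLimLiminf}) and makes the induction on $\dim S$ clean.
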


We begin with the following result, which generalises \cite[Lem.\ 4.1]{Xie15}.

\begin{lemma}\xlabel{lSemiContDeg}
In the setup of Theorem \ref{tSemiContinuity} let $H \subset \PP^N$ be the linear system of hyperplanes in $\PP^N$. Let $\deg_j (f_s)$ be the $j$-th Cremona degree of $f_s$. Then 
\[
\deg_j \colon S \to \mathbb{R} , \quad s \mapsto \deg_j (f_s)
\]
is a lower semi-continuous function on $S$. 
\end{lemma}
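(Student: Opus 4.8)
The plan is to analyze how the Cremona degree $\deg_j(f_s) = H^{n-j} \cdot f_s^* H^j$ varies with $s$ by spreading out the relevant cycles over the base $S$ and invoking the Principle of Conservation of Number (Theorem~\ref{tConsNumber}). First I would fix notation: write $n = \dim X_s$ for the fiber dimension, and work on the fiber product $\mathcal{X} := X \times_S X$, inside which the relative graph $\Gamma_f \subset \mathcal{X}$ of $f$ restricts, fiber by fiber, to the graph $\Gamma_{f_s}$ of the honest birational map $f_s$ (this is exactly the hypothesis that no whole fiber lies in the indeterminacy or exceptional locus of $f$, after possibly shrinking $S$). Using the two projections $p_1, p_2 \colon \mathcal{X} \to X$ (relative over $S$), and the relatively ample $H$ pulled back from $\PP^N$, I would form the relative cycle
\[
Z_s := (p_1)_* \bigl( \Gamma_{f_s} \cdot p_2^* H^j \bigr) \cdot H^{n-j}
\]
as a zero-cycle on $X_s$, whose degree is precisely $\deg_j(f_s)$ by the definition given in the excerpt.

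The key point is that these constructions can be carried out in the family all at once. I would take the closure of a general $p_2^*H^j$-type cycle and a general $H^{n-j}$-type cycle over $S$, intersect with $\Gamma_f$ on $\mathcal{X}$ (using Fulton's intersection theory for the smooth ambient total space, which exists since $X$, $S$, hence $\mathcal{X}$ are smooth), push forward to $X$, and obtain a cycle $\mathcal{Z}$ on $X$ of relative dimension $0$ over $S$, i.e.\ of dimension $\dim S$. Generically over $S$ this cycle restricts to the cycle $Z_s$ above, so by Theorem~\ref{tConsNumber} its fiberwise degree is constant and equal to $\deg_j(f_s)$ — for $s$ in a dense open $U \subseteq S$ where everything is ``general.'' This already gives the generic value of the function; the content of the lemma is what happens over the complementary closed set $S \setminus U$, where the chosen representative of $H^j$ (or $H^{n-j}$) may fail to be general for $f_s$, or where $f_s^{-1}$ acquires extra exceptional behavior.

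The main obstacle — and the heart of the argument, just as in \cite[Lem.~4.1]{Xie15} — is to show the degree can only \emph{drop} on such special loci, never jump up. The mechanism is that for a fixed $s_0$, the cycle $\mathcal{Z}|_{X_{s_0}}$ computed from globally-general representatives decomposes as the ``honest'' contribution $Z_{s_0}$ (the birational transform of a general member of $|H^j|$ restricted to $X_{s_0}$, intersected with $H^{n-j}$) plus a possibly-nonzero effective correction supported on the exceptional locus of $f_{s_0}^{-1}$ or on components of $\Gamma_{f_{s_0}}$ lying over indeterminacy loci. Since all the cycles in play are effective and $H$ is ample (so intersection numbers of effective cycles with powers of $H$ are nonnegative), this correction has nonnegative degree, whence $\deg_j(f_{s_0}) \le (\text{constant generic value})$. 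Concretely I would argue: choose a general pencil through a general $H^j$ and follow how $f_s^*$ acts, or more directly, use that $\deg_j(f_{s_0})$ is computed by a \emph{sufficiently general} element of the linear system on $X_{s_0}$ while the constant comes from an element general in the family — and generality in the family implies generality in the special fiber only up to effective error terms.

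Finally, to upgrade the pointwise inequality to lower semi-continuity I would run this argument in families over every subvariety of $S$: restrict the whole setup to an arbitrary irreducible closed $T \subseteq S$, obtain that $\deg_j$ is constant on a dense open of $T$ and $\le$ that constant on the rest, and then conclude by Noetherian induction that $\{ s \in S \mid \deg_j(f_s) \le a \}$ is Zariski closed for every $a$, i.e.\ $\deg_j$ is lower semi-continuous. I expect the only genuinely delicate bookkeeping to be the verification that the relative graph behaves well under base change to fibers and subvarieties (flatness of $\Gamma_f \to S$ after shrinking, and compatibility of refined intersection with restriction to fibers), which is routine given the smoothness hypotheses but must be stated carefully.
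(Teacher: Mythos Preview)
Your overall architecture matches the paper's: form the relative cycle $\alpha = \Gamma_f \cdot p_1^* H_S^{n-j} \cdot p_2^* H_S^j$ inside $\PP^N \times \PP^N \times S$, invoke Conservation of Number to get constancy of $\deg \alpha_s$, and then argue that at special points the honest Cremona degree can only be smaller. The Noetherian induction at the end is also what the paper does. But the middle step, where the drop is actually established, has a real gap.

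First, your opening claim that $\Gamma_f$ ``restricts, fiber by fiber, to the graph $\Gamma_{f_s}$'' is precisely what fails, and this failure is the \emph{entire} mechanism behind the drop. The hypothesis that no whole fiber lies in the indeterminacy or exceptional locus only guarantees that each $f_s$ is an honest birational map; it does \emph{not} guarantee $(\Gamma_f)_s = \Gamma_{f_s}$. At special $s$ the graph $\Gamma_{f_s}$ is merely one irreducible component of the scheme-theoretic fiber $(\Gamma_f)_s$, and the extra components account for the gap between the constant value $\deg \alpha_s$ and the possibly smaller $\deg_j(f_s)$. Your attribution of the drop to the representative of $H^j$ becoming non-general, or to extra exceptional behaviour of $f_s^{-1}$, is not the right picture: the hyperplane classes are pulled back from the ambient $\PP^N$ and are uniformly general in every fiber.

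Second, and more seriously, you assert that the correction term has nonnegative degree ``since all the cycles in play are effective and $H$ is ample.'' This needs the special fiber $(\Gamma_f)_{s_0}$ to be pure of dimension $n$, so that each extra component meets $p_1^* H^{n-j} \cdot p_2^* H^j$ in an honest effective zero-cycle. Over a higher-dimensional base there is no reason for this: the fiber of $\Gamma_f \to S$ over a bad point can jump in dimension, and your parenthetical that flatness of $\Gamma_f \to S$ is ``routine given the smoothness hypotheses'' is exactly where the argument breaks. The paper repairs this by passing to a smooth curve $C \subset S$ through the special point $z_0$ and meeting the generic locus $\Omega$: over a one-dimensional base, properness of the Chow scheme (equivalently the valuative criterion) forces the limit $(\Gamma_{f_C})_{z_0}$ to be a pure $n$-dimensional cycle, so $\deg(\alpha_C)_{z_0}$ genuinely decomposes as a sum, over the components $\mathcal{G}$ of $(\Gamma_{f_C})_{z_0}$, of nonnegative numbers $\mathcal{G}\cdot \mathrm{pr}_1^* H^{n-j}\cdot \mathrm{pr}_2^* H^j$, one of which is $\deg_j(f_{z_0})$. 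Without this curve-restriction trick your effectivity argument does not go through.
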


\begin{proof}
Consider the graph $\Gamma_f \subset X \times_S  X \subset \PP^N \times \PP^N \times S$ and the two (flat) projections $p_1, p_2$
\[
\xymatrix{
 & \PP^N \times \PP^N \times S  \ar[rd]^{p_2}\ar[ld]_{p_1} &  \\
\PP^N\times S &        & \PP^N \times S.
}
\]
Let $H^j_S \subset \PP^n\times S$ be the pull-back to $\PP^N \times S$ of the algebraic equivalence class $H^j$, that is, an intersection of $j$ relative hyperplanes in $\PP^N\times S$. Then, by Theorem \ref{tConsNumber} applied to $\mathcal{Y}=\PP^N \times \PP^N \times S$,$T = S$ and $\alpha = \Gamma_f . p_1^* (H_S^{\dim X_s -j}) . p_2^* (H_S^{j})$ (an easy computation shows that this is indeed a $\dim(S)$-cycle), the degree of $\alpha_t \in A_0 (\mathcal{Y}_t)$ is constant. Note that the cycle pull-backs via the flat morphisms (or, in any case, via morphisms between smooth varieties) and the intersection product in the ambient smooth variety $\PP^N \times \PP^N \times S$ are well-defined.
\smallskip

Consider $\Gamma_f \to S$ as an $S$-scheme. 
For $s \in S$ in a nonempty Zariski open subset $\Omega \subset S$, $(\Gamma_f)_s$ is equal to the graph of $f_s$ by an application of \cite[Lem.\ 2.3]{BBB15}, whereas for special $s \in Z:=S\backslash \Omega$, $\Gamma_{f_s}$ may be a proper component of $(\Gamma_f)_s$. On the other hand, if $(\Gamma_f)_s = \Gamma_{f_s}$, then $\alpha$  restricted to $(\Gamma_f)_s$ has degree $\deg_j (f_s)$.  

Thus we see that on $\Omega$, the function $\deg_j$ is constant. Let us check that $\deg_j$ can only get smaller at a point $z_0 \in Z$. This will imply the assertion. Indeed, if $\deg_j = a$ generically on $S$, there is a proper closed subset $S' \subset S$ such that $\{ s \in S \mid \deg_j (s) \le a-1 \} \subset S'$. Now considering the irreducible components of $S'$, and the restriction of the family $X$ to each of these, implies the assertion by induction on the dimension of $S$. 
\smallskip

To see that $\deg_j$ can only drop at $z_0 \in Z$, take a smooth curve $C \subset S$ with $z_0 \in C$ and $C \cap \Omega \neq \emptyset$, and restrict the family $\pi \colon X \to S$ to $C$, i.e. consider $\pi_C \colon X_C \to C$ and the restriction $f_C \colon X_C \dasharrow X_C$ of $f$ to $X_C$. Then we can do the above construction with $S$ replaced by $C$, that is, we can consider $\Gamma_{f_C}$ and a relative cycle $\alpha_C$. Note that since $\Omega \cap C \neq \emptyset$, for a general point $c\in C$ the degree of $(\alpha_C)_c$ will be equal to $\deg_j (f_c)$ (of course, $f_c=f_s$ for some general point $s\in S$). 

Now there is a finite set of points $\mathcal{P} \subset C$, where we can assume that $z_0 \in \mathcal{P}$, such that for $c\in C \backslash \mathcal{P}$, the fiber $(\Gamma_{f_C})_c$ of $\Gamma_{f_C}$ over $c$ is nothing but the graph of $(f_C)_c$. 

The graph $\Gamma_{f_C}$ is, by definition, the closure of $\Gamma_{f_C}\mid_{C\backslash \mathcal{P}} \to C \backslash \mathcal{P}$ in $\PP^N \times \PP^N \times C$. Now the advantage of working over a curve $C$ is that, by the valuative criterion of properness and the properness of Chow schemes of cycles in $\PP^N \times \PP^N$, the limits $(\Gamma_{f_C})_z$, for $z \in \mathcal{P}$, of the family $\Gamma_{f_C}\mid_{C\backslash \mathcal{P}} \to C \backslash \mathcal{P}$ will be cycles of dimension $\dim X_s$, that is, all components of $(\Gamma_{f_C})_{z_0}$ have dimension $\dim X_s$. This allows us to interpret the degree of the zero cycle $(\alpha_C)_{z_0}$ geometrically. Its degree is nothing but the sum of the numbers $\mathcal{G} . \mathrm{pr}_1^* (H^{\dim X_s -j}) . \mathrm{pr}_2^* (H^{j})$ running over the irreducible components $\mathcal{G}$ of $(\Gamma_{f_C})_{z_0}$, 
 possibly counted with suitable multiplicities. Here $\mathrm{pr}_i$ are the projections of $\PP^N\times \PP^N$ onto its factors. Since $\Gamma_{f_{z_0}}$ is one of the components, we see that the Cremona degree can only drop at a special point $z_0$.
\end{proof}

\begin{proof}(of Theorem \ref{tSemiContinuity})
By definition we have
\[
\lambda_j (f) = \lim_{n\to \infty } (\deg_j (f^n ))^{\frac{1}{n}}.
\]
Applying Lemma \ref{lSemiContDeg} to each iterate of $f$, we see that for each $n \in \NN$, there is a proper closed subset $Z_n \subset S$ such that on $S\backslash Z_n$, $\deg_j (f_s)$ is constant. Thus outside the countable union of proper closed subsets
\[
\bigcup_{k_0 \ge 1} \left( \bigcap_{k \ge k_0} Z_{k} \right) 
\]
the function $s \mapsto \lambda_j(f_s)$ is constant (here we use Remark \ref{rLimLiminf}). Now arguing again by induction on the dimensions of the irreducible components of the closed subsets in the previous countable union (these are countably many proper closed subsets) gives the assertion. 
\end{proof}

One instance where one can use the above considerations is the following result, which might be useful when trying to prove irrationality of very general cubic fourfolds. It is based on a suggestion by Miles Reid.

\medskip

Recall that given an injective map of (abstract) groups $\theta\colon G\, \hookrightarrow \mathrm{Bir}(X)$, we get an action of $G$ on $X$ by birational isomorphisms, and we say that $G$ is an \emph{algebraic subgroup} of $\mathrm{Bir}(X)$ if the domain of definition of the partially defined map $G\times X \dasharrow X$, $(g,x) \mapsto \theta(g)(x)$, contains a dense open set of $G \times X$ and coincides on it with a rational map (in the sense of algebraic geometry). By a theorem of Rosenlicht \cite[Thm.\ 2]{Ros56}, one can characterize algebraic subgroups $G$ equivalently by saying that they are those subgroups for which there is a birational model $Y \dasharrow X$ such that $G$ acts on $Y$ via biregular maps/automorphisms. 

\begin{proposition}\xlabel{pCubicFourfolds}
Let $X$ be a smooth cubic fourfold. For a line $l \subset X$ denote by $\varphi_l \colon \widetilde{X} \to \PP^2$ the associated conic fibration, and by 
\[
\mathrm{Bir}_{\varphi_l}(X) \subset \mathrm{Bir}(X)
\]
the subgroup consisting of birational self-maps which preserve the fibration $\varphi_l$, i.e. map a general fiber into itself. The following holds. 
\begin{enumerate}
\item
$X$ is rational if and only if there is an algebraic subgroup $(\CC^*)^2$ in $\mathrm{Bir}(X)$.
\item
If $X$ is rational, then there is a family of birational maps
\[
\xymatrix{
X \ar[rd]_{\pi } \ar@{-->}[rr]^{f} &    & X \ar[ld]^{\pi} \\
      & (\CC^*)^2 & 
}
\]
such that for any $s \in S:= (\CC^*)^2$ all dynamical degrees of $f_s:=f\mid_{X_s}$ are equal to $1$ and such that the set $\{ f_s \}_{s\in (\CC^*)^2}$ is contained in no subgroup $\mathrm{Bir}_{\varphi_l}(X)$ as above.
\item
If $X$ is rational, then there is some birational self-map $f \in \mathrm{Bir}(X)$ with all dynamical degrees equal to $1$ and such that $f$ is contained in no subgroup $\mathrm{Bir}_{\varphi_l}(X)$. There is even such a map $f$ with the property that the growth of the Cremona degrees of the iterates of $f$ is \emph{bounded}.
\end{enumerate}
\end{proposition}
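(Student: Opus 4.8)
The plan is to deduce all three statements from two inputs: the classical unirationality of every smooth cubic fourfold, and the identification of $\mathrm{Bir}_{\varphi_l}(X)$ --- the group of birational self-maps of $X$ preserving the conic fibration $\varphi_l$ fibrewise --- with the group of birational automorphisms of the generic conic fibre $C_l$ over the function field $K_l$ of the base of $\varphi_l$. For every line $l$ on the smooth variety $X$ this generic fibre is a smooth conic, and a smooth projective curve is its own minimal model, so $\mathrm{Bir}_{\varphi_l}(X) = \mathrm{Aut}_{K_l}(C_l)$ is a form of $\mathrm{PGL}_2$ over $K_l$; in particular it embeds, as an abstract group, into $\mathrm{PGL}_2(\overline{K_l})$.

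For (1), the implication ``$\Rightarrow$'' is immediate: a birational identification $X \dasharrow \PP^4$ conjugates any two-dimensional subtorus of the maximal torus $(\CC^*)^4 \subset \mathrm{PGL}_5 = \mathrm{Aut}(\PP^4)$ onto an algebraic subgroup of $\mathrm{Bir}(X)$. For ``$\Leftarrow$'', I would start from an algebraic subgroup $(\CC^*)^2 \hookrightarrow \mathrm{Bir}(X)$ and use \cite[Thm.\ 2]{Ros56} to get a model $Y$ birational to $X$ on which $(\CC^*)^2$ acts biregularly; this action is faithful, hence generically free, so the rational quotient $Y \dasharrow B$ has a \emph{surface} $B$ as base, and its generic fibre is a torsor under $(\CC^*)^2_{\CC(B)}$, which is trivial by Hilbert 90. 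Therefore $\CC(Y) = \CC(B)(t_1,t_2)$, and $Y$ --- equivalently $X$ --- is rational iff $B$ is. Since $X$ is unirational, so is the surface $B$, which is then rational by Castelnuovo's criterion; hence $X$ is rational.

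For (2), I would fix a birational map $\psi\colon \PP^4 \dasharrow X$ and let $A := \psi(\CC^*)^2\psi^{-1} \subset \mathrm{Bir}(X)$ be the algebraic subgroup obtained in (1) from a fixed torus $(\CC^*)^2 \subset \mathrm{PGL}_5$. Taking the constant family with fibre $X$ over $S := (\CC^*)^2$ and the self-map $f$ given fibrewise by $f_s := \psi\circ s\circ\psi^{-1}$ (a rational map, since the linear $S$-action on $\PP^4$ is regular), each $f_s$ is birationally conjugate to the linear automorphism $s$ of $\PP^4$, which acts trivially on cohomology, so by birational invariance of dynamical degrees all $\lambda_j(f_s)$ equal $1$. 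If the set $\{f_s\}_{s\in S} = A$ were contained in some $\mathrm{Bir}_{\varphi_l}(X)$, then $(\CC^*)^2$ would embed as an abstract group into $\mathrm{PGL}_2(\overline{K_l})$; but $(\CC^*)^2$ contains $(\ZZ/3\ZZ)^2$, whereas no finite subgroup of $\mathrm{PGL}_2$ over an algebraically closed field of characteristic zero --- these being cyclic, dihedral, $A_4$, $S_4$ or $A_5$ --- contains a copy of $(\ZZ/3\ZZ)^2$, a contradiction.

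For (3), keeping $A \cong (\CC^*)^2$, I would set $T_l := A \cap \mathrm{Bir}_{\varphi_l}(X)$, an algebraic subgroup of $A$ which is \emph{proper} by (2). As $l$ ranges over the variety of lines on $X$, the $T_l$ take only countably many values, since $(\CC^*)^2$ has only countably many closed subgroups; hence $\bigcup_l \mathrm{Bir}_{\varphi_l}(X)\cap A$ is a countable union of proper closed subvarieties of $A$ and so omits some $f \in A$. This $f$ lies in no $\mathrm{Bir}_{\varphi_l}(X)$, has all dynamical degrees $1$ as in (2), and has bounded Cremona degrees: $A$ being a connected algebraic subgroup, \cite[Thm.\ 2]{Ros56} together with equivariant resolution and compactification gives a \emph{smooth projective} model $Y$ birational to $X$ on which $A$ acts biregularly, and there $f$, lying in the connected group $A$, acts trivially on $H^\ast(Y,\QQ)$, so the Cremona degrees of its iterates on $Y$ are constant --- and boundedness of the degree sequence passes to $X$ under birational conjugacy (cf.\ Theorem~\ref{tSubMultiplicative}). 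The step I expect to be the main obstacle is the structural input of the first paragraph: showing that $\mathrm{Bir}_{\varphi_l}(X)$ really is controlled by $\mathrm{PGL}_2$ of a field for \emph{every} line $l$, which requires a careful hold on the degenerations of the conic fibration $\varphi_l$ on the cubic fourfold; this is precisely the fact that powers the group-theoretic obstructions in (2) and (3), while in (1) the one genuinely substantive (if routine) point is the torsor triviality via Hilbert 90 combined with Castelnuovo's theorem.
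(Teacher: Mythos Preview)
Your argument is correct, but for parts (2) and (3) it follows a genuinely different route from the paper's.

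For (1) your proof is essentially the paper's: Rosenlicht plus Zariski-local triviality of the torus torsor plus Castelnuovo on the unirational base surface.

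For (2), the paper argues geometrically: the $(\CC^*)^2$-orbit of a general point $p\in X$ is a rational \emph{surface}, whereas any subgroup of $\mathrm{Bir}_{\varphi_l}(X)$ moves $p$ only inside its conic fibre; and the dynamical degrees are forced to $1$ by the boundedness of the Cremona degrees in the family (Lemma~\ref{lSemiContDeg}) together with closure of the family under iteration. You instead invoke birational invariance of dynamical degrees directly (conjugating back to a linear map of $\PP^4$), and obstruct containment in $\mathrm{Bir}_{\varphi_l}(X)$ group-theoretically via the identification with a form of $\mathrm{PGL}_2$ and the classification of its finite subgroups. For (3), the paper constructs an explicit $\CC^*$-orbit on $X$ of degree too large to be a conic by choosing a suitable one-parameter diagonal subgroup in $\mathrm{GL}_4$; you instead use that $(\CC^*)^2$ has only countably many closed subgroups, so the proper subgroups $A\cap\mathrm{Bir}_{\varphi_l}(X)$ cannot cover $A$. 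Your approach is more structural and avoids the degree-bounding step for the family of conics; the paper's is more explicit and stays within the semicontinuity framework it has already built. Two small remarks: your self-identified worry is misplaced---only the \emph{generic} fibre of $\varphi_l$ matters for $\mathrm{Bir}_{\varphi_l}(X)=\mathrm{Aut}_{K_l}(C_l)$, so degenerations are irrelevant; and the birational invariance of bounded Cremona growth you need at the end of (3) is Remark~\ref{rBounded}, not Theorem~\ref{tSubMultiplicative}.
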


\begin{proof}
For (1) note that if $X$ is rational, $\mathrm{Bir}(X)$ certainly contains a subgroup $(\CC^*)^2$, e.g., coordinate rescalings. Conversely, if there is an algebraic subgroup $(\CC^*)^2$ in $\mathrm{Bir}(X)$, then $X$ is birationally a $(\CC^*)^2$-principal bundle over a unirational, hence rational, surface (note that $(\CC^*)^2$, being abelian, will act generically freely once it acts faithfully). Since any such $(\CC^*)^2$-principal bundle is Zariski locally trivial, $X$ is rational in this case.

To prove (2), note that the existence of the family is clear by (1), so we only have to see that for any $s \in S:= (\CC^*)^2$ all dynamical degrees of $f_s=f\mid_{X_s}$ are equal to $1$. Now by Lemma \ref{lSemiContDeg}, clearly all Cremona degrees are bounded in the family. On the other hand, since it arises from a sub-\emph{group} $(\CC^*)^2$ of $\mathrm{Bir}(X)$, for any $f_s$, $f_s^n$ is again in the family, i.e. equal to some $f_t$. Hence the Cremona degrees of the iterates would be unbounded, unless all dynamical degrees are equal to $1$. Moreover, the set $\{ f_s \}_{s\in (\CC^*)^2}$ is contained in no subgroup $\mathrm{Bir}_{\varphi_l}(X)$, since the images of 
\[
(\CC^*)^2 \ni s \mapsto f_s(p) \subset X
\]
are rational \emph{surfaces} for a general point $p\in X$ if we construct the family $\{ f_s \}$ from a subgroup $(\CC^*)^2$ in $\mathrm{Bir}(X)$ as in (1). But if $\{ f_s \}_{s\in (\CC^*)^2}$ were  contained in a subgroup $\mathrm{Bir}_{\varphi_l}(X)$, these images would be conics.


To prove (3), suppose that
\[
\Phi \colon \PP^4 \dasharrow X
\]
is a birational map, and suppose that $x\in \PP^4$ is a point in which $\Phi$ is defined and a local isomorphism. Our goal is now to find a one-parameter group
\[
\{ f_t \}, \quad t \in \CC^* , \quad f_{t}\circ f_s = f_{ts}
\]
of birational self-maps of $X$ such that $\Gamma$, the (closure of the) image of 
\[
\CC^* \ni t \mapsto f_t (p) 
\]
is a curve on $X$ through $p$ which is \emph{not} a conic. Here we want all $f_t$ to be birational self-maps with all dynamical degrees equal to $1$. So $\Gamma$ is an orbit under the action of $\CC^*$ on $X$. If we can find such a one-parameter group resp. such a $\Gamma$ we are done: namely, if we choose $t$ to be of infinite order, then $f:=f_t$ has the required properties; indeed, if $f$ preserved a conic fibration, then the iterates $f^n (p)$, $n\in \NN$, would all have to lie in a conic $Q$. These iterates also lie on the curve $\Gamma$ and are distinct by our choice of $t$, thus $\Gamma$ and $Q$ would coincide (they have infinitely many points in common), a contradiction. 

One way to construct such a one-parameter subgroup is as follows: consider the family of all conics $\mathcal{C}_p$ through $p=\Phi (x)$. For every conic $C \in \mathcal{C}_p$ consider its birational transform $\Phi^{-1}_{\mathrm{bir}}(C)$ in $\PP^4$ through $x$. Now the degrees of the curves $\Phi^{-1}_{\mathrm{bir}}(C)$ for $C$ ranging over $\mathcal{C}_p$ are bounded above since degrees are lower-semicontinuous in families (compare also Lemma \ref{lCyclesDeg} below, and its proof). Assume without loss of generality that $x$ is the point $(1,1,1,1) \in \CC^4 \subset \PP^4$. 
Now consider the one parameter subgroups 
\[
\mathrm{diag}(t^a, t^b, t^c, t^d) \subset \mathrm{GL}_4 (\CC), \quad t \in \CC^*, \quad a,b,c,d \in \mathbb{N}. 
\]
These give families of birational maps of $\PP^4$ via rescaling the coordinates. The orbit closures of these subgroups (which are all isomorphic to $\CC^*$) are rational curves in $\PP^4$ whose degree is unbounded. Hence one of them is certainly not in the set of curves $\Phi^{-1}_{\mathrm{bir}}(C)$. This achieves our goal.
\end{proof}

\begin{remark}\xlabel{rBounded}
One might hope that birational self-maps $f$ with all dynamical degrees equal to $1$ are in some sense ``classifiable" on a very general cubic fourfold. Restricting this class even further, the maps whose iterates have bounded growth of the Cremona degrees can be studied. For surfaces, such maps $f$ are often called \emph{elliptic} and it is known, see \cite{B-C13}, that they are precisely those
that are \emph{virtually isotopic to the identity}, that is, on some model, an iterate $f^{n_0}$ belongs to the connected component of the identity of the (biregular) automorphism group. On varieties of higher dimension the term elliptic does
not seem so appropriate, so we call them \emph{bounded} maps here for now. Note that not only the dynamical degrees, but also the \emph{growth rate} of the sequence of Cremona degrees is invariant under birational conjugacy. So bounded maps
are a birationally invariant class. These maps have topological entropy equal to zero, hence are what is sometimes called topologically deterministic. If on a very general cubic fourfold it were true that any such map is a birational automorphism of finite order (we believe this could be true, at least none of the examples we know contradicts it), then Proposition \ref{pCubicFourfolds}.(3) would imply that a very general cubic fourfold is irrational. 
\end{remark}

\begin{remark}\xlabel{rConstant}
In part (3) of Proposition \ref{pCubicFourfolds}, one can even assume that the sequence of Cremona degrees is constant. This follows  from Lemma \ref{lSemiContDeg}: if $f_t$ is a family of birational self-maps of $X$ parametrized by an algebraic subgroup $\CC^* \subset \mathrm{Bir}(X)$, then the Cremona degrees are constant outside of a finite set of points in $\CC^*$. Now $f_s^n = f_{s^n}$, and for general $s$, the set of its powers $s^n$ avoids the previous finite set of points.
\end{remark}

\section{Upper bounds for dynamical degrees in terms of degrees}\xlabel{sUpperDeg}

For birational maps $f, g \colon \PP^n \dasharrow \PP^n$ the $k$-th Cremona degree $\deg_k$ is submultiplicative in the sense that 
\[
\deg_k (fg) \le \deg_k (f)\deg_k(g)
\]
whence also $\deg_k(f^n) \le \deg_k(f)^n$ and $\lambda_k(f) \le \deg_k (f)$; see \cite[Lem.\ 4.6]{RS97}. We want to investigate inhowfar the inequality $\lambda_k(f) \le \deg_k (f)$ remains valid in general. A version of quasi-submultiplicativity in the general case is proved in \cite[Prop.\ 2.6]{Guedj10}, but this does not directly yield good bounds because of the constant that shows up there. However, as we will see, another application of Theorem \ref{tConsNumber} easily yields a result for certain varieties more general than $\PP^n$.

We will use the following result.

\begin{lemma}\xlabel{lCycleTransformBirTransform}
\begin{enumerate}
\item
Let $f \colon X \to Y$ be a surjective morphism of smooth projective varieties. For any $l \in \mathbb{N}$, let
\[
Y_l := \{ y \in Y \mid \dim f^{-1}(y) \ge l \}, 
\]
and suppose that $V$ is a subvariety of $Y$ which intersects all irreducible components of the subvarieties $Y_l$ as well as their mutual intersections properly. Then the pull-back class $f^*[V]$ in $A_* (X)$ is represented by $f^{-1}(V)$ which is equal to the birational transform of $V$ under $f^{-1}$.
\item
Now let $g \colon X_1 \dasharrow X_2$ be a birational map. Pick a resolution of $g^{-1}$
\[
\xymatrix{
 & \widetilde{X}_2\ar[rd]^q\ar[ld]_p & \\
 X_1 \ar@{-->}[rr]^g &  & X_2
}
\]
where $q$ is a succession of blowups in subvarieties lying over components of the base locus $\mathrm{Bs}(g^{-1})\subset X_2$. Suppose that $Z$ is a subvariety which satisfies the conditions for $V$ in part (1) with respect to the morphism $q$ and which is also not contained in $\mathrm{Exc}(g^{-1})$, the union of the subvarieties in $X_2$ contracted by $g^{-1}$. Then $g^*[Z]$ is represented by the birational transform of $Z$ under $g^{-1}$. 
\end{enumerate}
\end{lemma}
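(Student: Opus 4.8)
The plan is to deduce (2) from (1) applied to the blow-up morphism $q$, so essentially all the work is in (1), where the point is simply to unwind the definition of the pull-back. Recall from \cite[Ch.\ 8]{Ful98} that for a morphism $f\colon X\to Y$ of smooth varieties, $f^{*}$ on cycles is defined by factoring $f$ through its graph $\gamma_{f}\colon X\hookrightarrow X\times Y$, a regular embedding of codimension $\dim Y$ (as $Y$ is smooth), and putting $f^{*}[V]:=\gamma_{f}^{!}\bigl([X\times V]\bigr)$, the refined Gysin pull-back. By construction this class is supported on $\gamma_{f}^{-1}(X\times V)=f^{-1}(V)$ and has pure dimension $\dim V+\dim X-\dim Y$. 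So proving (1) amounts to showing that it is the fundamental cycle of the subscheme $f^{-1}(V)$, that all multiplicities are $1$, and that $f^{-1}(V)$ is the birational transform of $V$ under $f^{-1}$.

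First I would run a dimension count. Stratify $X$ by the closed sets $X_{l}:=\{x\in X\mid\dim_{x}f^{-1}(f(x))\ge l\}$; if $W$ is a component of $f^{-1}(V)$ with generic point in $X_{l}\setminus X_{l+1}$, then $W\subseteq X_{l}$, so $W$ maps to an irreducible subvariety of $V\cap Y_{l}$, and its generic fibre, sitting inside a fibre of $f$ of dimension exactly $l$ at the generic point of $W$, has dimension $\le l$; hence $\dim W\le\dim(V\cap Y_{l})+l$. On the other hand $f^{-1}(Y_{l})$ surjects onto $Y_{l}$ with every fibre of dimension $\ge l$, so $\dim Y_{l}+l\le\dim f^{-1}(Y_{l})\le\dim X$, i.e.\ $\mathrm{codim}_{Y}Y_{l}\ge l-(\dim X-\dim Y)$. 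Combined with the hypothesis that $V$ meets every component of every $Y_{l}$ in the expected codimension $\mathrm{codim}_{Y}V+\mathrm{codim}_{Y}Y_{l}$, this gives $\dim W\le\dim V+\dim X-\dim Y$; the analogous bookkeeping over the mutual intersections of the components is what additionally rules out components of $f^{-1}(V)$ over the locus contracted by $f$, so that $f^{-1}(V)$ is exactly the birational transform of $V$ (for a single blow-up this is just the statement that strict and total transforms agree when $V$ meets the centre properly).

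Since $f^{-1}(V)$ thus has the expected pure dimension, the intersection computing $f^{*}[V]$ is \emph{proper}, and \cite[Ch.\ 7--8]{Ful98} identifies $f^{*}[V]$ with the cycle-theoretic intersection $\sum_{i}m_{i}[W_{i}]$ over the components $W_{i}$ of $f^{-1}(V)$. The multiplicities are $1$: $Y$ smooth makes $V$ a local complete intersection near its generic point, so near the generic point of each $W_{i}$ the scheme $f^{-1}(V)$ is cut out in the Cohen--Macaulay variety $X$ by $\mathrm{codim}_{Y}V$ equations and is of that codimension, whence the higher $\mathrm{Tor}$-sheaves vanish there and $m_{i}$ is the generic length of $\mathcal{O}_{f^{-1}(V)}$ along $W_{i}$, which is $1$ because $f$ is an isomorphism over a dense open of $V$ (the setting in which $f^{-1}$ makes sense, e.g.\ $f$ a composition of blow-ups). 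This proves (1). For (2) one has $g^{*}=p_{*}\circ q^{*}$ by definition of the chosen resolution; $q$ is a succession of blow-ups, hence a surjective morphism of smooth projective varieties, and $Z$ satisfies the hypotheses of (1) relative to $q$, so $q^{*}[Z]=[q^{-1}(Z)]$ with $q^{-1}(Z)=\widetilde{Z}$ the iterated strict transform of $Z$. Because $Z\not\subseteq\mathrm{Exc}(g^{-1})$, $g^{-1}=p\circ q^{-1}$, and $q$ restricts to a birational morphism $\widetilde{Z}\to Z$, the subvariety $\widetilde{Z}$ is not contracted by $p$, so $p|_{\widetilde{Z}}$ is birational onto its image and $p_{*}[\widetilde{Z}]=[\overline{p(\widetilde{Z})}]$; as $\overline{p(\widetilde{Z})}$ is the birational transform of $Z$ under $g^{-1}$, this is (2).

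The hard part will be the dimension count of the second paragraph --- ensuring that the condition on $V$ relative to the jumping loci $Y_{l}$, their components, and their mutual intersections (which is exactly what is needed to cover iterated, not just single, blow-ups) really does collapse $f^{-1}(V)$ to the birational transform of $V$, pure of the expected dimension and with no components over the contracted locus. Once that is secured, everything else in (1) and (2) is formal, given \cite[Ch.\ 7--8]{Ful98} and standard facts about strict transforms.
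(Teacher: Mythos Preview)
Your argument is correct and matches the paper's approach: deduce (2) from (1) applied to $q$ via $g^{*}=p_{*}q^{*}$, and for (1) verify that $f^{-1}(V)$ has the expected pure dimension by a stratified dimension count so that Fulton's refined pull-back collapses to the fundamental cycle of $f^{-1}(V)$ with multiplicity one. The paper itself simply cites \cite{EiHa15} and \cite{Truo15} for (1) rather than writing out the dimension and Tor argument you sketch, so your proposal is in fact more detailed than the paper's proof; the one passage you leave vague --- the role of the mutual intersections of the $Y_{l}$ in guaranteeing no extra components under \emph{iterated} blow-ups --- is precisely what those references supply, and you are right to flag it as the point requiring care.
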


\begin{proof}
Part (1) is \cite[Thm.\ A.5 and Thm.\ 1.23]{EiHa15} or  \cite[Lem.\ 3.1]{Truo15} and proof of Lem.\ 3.2(a) ibid. For part (2) we can apply part (1) to conclude that $q^*[Z]$ is represented by the birational transform of $Z$ under $q^{-1}$ on $\widetilde{X}_2$. But $g^*[Z]$ is equal to $p_*q^*[Z]$, and $p_*$ is an isomorphism onto its image on $q^*[Z]$ outside of the strict transforms on $\widetilde{X}_2$ of the subvarieties contracted by $g$, hence the claim. 
\end{proof}

\begin{definition}
We will call a $Z$ as in part (2) of Lemma \ref{lCycleTransformBirTransform} $g$-\emph{adapted}.
\end{definition} 

\begin{theorem}\xlabel{tSubMultiplicative}
Let $X$ be a smooth projective variety such that $H^k_{\RR}(X)$ is one-dimensional generated by $H^k$ where $H\subset X$ is a very ample divisor. For any birational map $f\colon X \dasharrow X$ consider the map $f^* \colon H^k_{\RR}(X) \to H^k_{\RR}(X)$ defined in Section \ref{sPrelim}. Suppose this map is multiplication by $\delta_k (f) \in \NN$. Also suppose that for any birational map $g \colon X \dasharrow X$, there is an $m\in \NN$ and a flat family $p\colon \mathcal{C} \to S$ of effective irreducible codimension $k$ cycles homologically equivalent to $\delta_k(f)mH^k$ over a smooth irreducible base $S$ such that $\mathcal{C}_{s_0}$ is the birational transform under $f^{-1}$ of a general element in $mH^k$, and for general $s\in S$, $\mathcal{C}_s$ is $g$-adapted in the sense of Lemma \ref{lCycleTransformBirTransform} (2). 
Then 
\[
\delta_k (f \circ g) \le \delta_k (f) \cdot \delta_k (g).
\]
In particular, $\lambda_k(f) \le \delta_k (f)$ for such a birational map $f \colon X \dasharrow X$. 
\end{theorem}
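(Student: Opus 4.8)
The plan is to compute $\delta_k(f\circ g)$ by realizing $(f\circ g)^*H^k$ as a cycle whose degree we can estimate geometrically, using the Principle of Conservation of Number (Theorem~\ref{tConsNumber}) to transport the computation from a generic fiber of the family $p\colon\mathcal C\to S$ to a special one. First I would unwind the definition: since $H^k_{\RR}(X)$ is spanned by $H^k$ and $(f\circ g)^* = g^*\circ f^*$ on cohomology, we have $\delta_k(f\circ g) = \delta_k(g)\cdot\delta_k(f)$ \emph{if} we can show $g^*$ applied to the class $f^*H^k = \delta_k(f)H^k$ is computed by the naive birational transform. More precisely, $\deg_k(f\circ g) = H^{n-k}.(f\circ g)^*H^k$, and the content is to see that for a \emph{general} representative the birational transform under $g^{-1}$ of the birational transform under $f^{-1}$ of a general member of $|mH^k|$ represents $g^*f^*(mH^k)$ on the nose, with no extra contributions.

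The key steps, in order: (i) Take the flat family $p\colon\mathcal C\to S$ supplied by the hypothesis, so $\mathcal C_{s_0}$ is the birational transform under $f^{-1}$ of a general $D\in|mH^k|$ and is homologically equivalent to $\delta_k(f)mH^k$, and for general $s$ the fiber $\mathcal C_s$ is $g$-adapted. (ii) By Lemma~\ref{lCycleTransformBirTransform}(2), for such general $s$ the class $g^*[\mathcal C_s]$ is represented by the birational transform of $\mathcal C_s$ under $g^{-1}$; since $[\mathcal C_s] = \delta_k(f)mH^k$ in $H^k_{\RR}(X)$ and $g^*$ is multiplication by $\delta_k(g)$, this birational transform is an effective cycle homologically equivalent to $\delta_k(f)\delta_k(g)mH^k$, hence has degree $H^{n-k}.(\delta_k(f)\delta_k(g)mH^k) = \delta_k(f)\delta_k(g)m\cdot(H^n)$. (iii) Now apply Theorem~\ref{tConsNumber}: form the total space $\mathcal D\subset X\times S$ of the birational transforms under $g^{-1}$ of the fibers of $\mathcal C$ (taking closures, this is a family of cycles over $S$, possibly with extra components appearing over a proper closed locus), intersect the corresponding relative cycle with $p_1^*H^{n-k}$ to get a relative $0$-cycle over $S$, and conclude its degree is constant on regular points of $S$. (iv) Evaluate at $s_0$: the fiber over $s_0$ contains, as one component, the birational transform under $g^{-1}$ of $\mathcal C_{s_0} = (f^{-1})_{\mathrm{bir}}(D)$, i.e.\ (a representative of) $(f\circ g)^*$ of a general member of $|mH^k|$, possibly together with further effective components and multiplicities $\ge 1$. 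Comparing degrees — the constant value $\delta_k(f)\delta_k(g)m(H^n)$ from a general fiber versus the contribution of the distinguished component at $s_0$, which is $\deg_k(f\circ g)\cdot m$ divided by the same normalization — yields $\delta_k(f\circ g)\le \delta_k(f)\cdot\delta_k(g)$, the extra components and multiplicities only decreasing the relevant count. (v) Finally, iterating gives $\delta_k(f^m)\le\delta_k(f)^m$, so $\lambda_k(f) = \lim_m \deg_k(f^m)^{1/m} = \lim_m \delta_k(f^m)^{1/m}\le\delta_k(f)$.

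The main obstacle I expect is step (iv): controlling the fiber $(\mathcal D)_{s_0}$ of the closure. One must argue that all its components have the expected dimension (so that the degree of the relative $0$-cycle really decomposes as a sum over components with positive multiplicities, as in the proof of Lemma~\ref{lSemiContDeg}) and that the \emph{specific} component computing $\delta_k(f\circ g)$ — namely $(g^{-1})_{\mathrm{bir}}\big((f^{-1})_{\mathrm{bir}}(D)\big)$ — genuinely appears among them with multiplicity at least one. The first point is the analogue of the valuative-criterion/properness-of-Chow argument used in Lemma~\ref{lSemiContDeg}, and one can reduce to a curve $C\subset S$ through $s_0$ meeting the general locus to make it work; the second follows because $(\mathcal C_s)_{s\in S}$ specializes to $\mathcal C_{s_0}$ and the birational transform under $g^{-1}$ is defined compatibly away from the exceptional and indeterminacy loci, which have positive codimension, so the distinguished component is the flat limit of the general birational transforms. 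Once these are in place, everything else is the bookkeeping of intersection numbers against $H^{n-k}$ together with the observation that the hypotheses force $f^*$ and $g^*$ to act on the one-dimensional space $H^k_{\RR}(X)$ by the integers $\delta_k(f),\delta_k(g)$.
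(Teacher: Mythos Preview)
Your proposal is correct and follows essentially the same approach as the paper. The paper packages your steps (iii)--(iv) as a separate lemma (Lemma~\ref{lCyclesDeg}), proving lower semicontinuity of $s\mapsto\deg\big((g^{-1}\circ\iota_s)_*(\mathcal C_s)\big)$ via exactly the graph-in-$X\times X\times S$ and Conservation of Number argument you describe, including the reduction to a curve to control component dimensions; it then applies this lemma together with Lemma~\ref{lCycleTransformBirTransform}(2) at the general fiber to conclude, just as you do.
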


To prove Theorem \ref{tSubMultiplicative} we use the following Lemma.

\begin{lemma}\xlabel{lCyclesDeg}
Let $X$ be a smooth and projective variety of dimension $n$ with a very ample class $H$, $S$ be a smooth variety and 
let $\mathcal{C} \to S$ be a flat family of irreducible $k$-cycles of some fixed degree on $X \times S$, $\iota\colon  \mathcal{C} \hookrightarrow X \times S$ be the inclusion, and $g\colon X \dasharrow X$ a birational map such that $g^{-1} \circ \iota_s$ is a well-defined birational map on every fiber $\mathcal{C}_s$. Then the function
\[
s \mapsto \deg ( (g^{-1}\circ \iota_s)_* (\mathcal{C}_s))
\] 
which associates to a point $s\in S$ the degree of the birational transform of the cycle $\mathcal{C}_s$ under $g^{-1} \circ \iota_s$ (with respect to $H$) is a lower semi-continuous function on $S$.
\end{lemma}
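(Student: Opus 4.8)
The plan is to reduce Lemma \ref{lCyclesDeg} to the Principle of Conservation of Number (Theorem \ref{tConsNumber}), exactly as in the proof of Lemma \ref{lSemiContDeg}, the only new feature being that the ambient "graph" is now the graph of $g^{-1}$ rather than of $f$. First I would set up the relevant incidence variety: inside $X \times X \times S$ consider the closure $\Gamma \subset X \times X \times S$ of the locus $\{ (x, g^{-1}(x), s) : x \in \mathcal{C}_s \cap \mathrm{dom}(g^{-1}) \}$, or equivalently the proper transform of $\mathcal{C} \subset X \times S$ under $(g^{-1} \times \mathrm{id}_S) \times \mathrm{id}$; denote by $\mathrm{pr}_1, \mathrm{pr}_2 \colon X \times X \times S \to X \times S$ the two projections and by $q \colon \Gamma \to S$ the structure map. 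By hypothesis $g^{-1} \circ \iota_s$ is a well-defined birational map on each fiber, so for every $s$ the fiber $\Gamma_s$ has a distinguished component of dimension $k$ mapping birationally onto the cycle $(g^{-1} \circ \iota_s)_*(\mathcal{C}_s)$ under $\mathrm{pr}_2$.

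Next I would produce the relative $0$-cycle whose degree is to be compared. Let $H_S^{n-k} \subset X \times S$ be the pullback of the class $H^{n-k}$, i.e. an intersection of $n-k$ relative very ample divisors. Form the cycle $\alpha := \Gamma \cdot \mathrm{pr}_2^*(H_S^{n-k})$ on $X \times X \times S$; a dimension count (as in Lemma \ref{lSemiContDeg}) shows that $\alpha$ is a $\dim(S)$-cycle, and all the pullbacks and intersection products used here are legitimate because $X \times X \times S$ is smooth and $H$ is very ample (so a general such intersection meets $\Gamma$ properly and we may use Theorem \ref{tConsNumber} after a generic choice of the relative hyperplanes). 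Applying Theorem \ref{tConsNumber} with $\mathcal{Y} = X \times X \times S$, $T = S$, the degree of $\alpha_t \in A_0(\mathcal{Y}_t)$ is constant over all regular closed points $t \in S$ — and since $S$ is smooth, every closed point is regular, so the degree of $\alpha_s$ is literally constant on $S$.

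Then I would relate $\deg \alpha_s$ to $\deg((g^{-1} \circ \iota_s)_*(\mathcal{C}_s))$. Over a nonempty open $\Omega \subset S$ the fiber $\Gamma_s$ is irreducible and equals precisely the graph of $g^{-1} \circ \iota_s$ (this is where one invokes \cite[Lem.\ 2.3]{BBB15} as in Lemma \ref{lSemiContDeg}), so there $\deg \alpha_s = \deg((g^{-1}\circ \iota_s)_*(\mathcal{C}_s))$ and this degree is constant; for special $s$ in $Z := S \setminus \Omega$ the fiber $\Gamma_s$ may acquire extra components, but the graph of $g^{-1} \circ \iota_s$ is always one of them. To see that the desired degree can only drop at a point $z_0 \in Z$, I would restrict to a smooth curve $C \subset S$ through $z_0$ meeting $\Omega$, base change the family and $\Gamma$ to $C$, and use the valuative criterion of properness together with properness of the relevant Chow variety of $k$-cycles in $X \times X$: the limit fiber $(\Gamma_C)_{z_0}$ is then an honest $k$-cycle, all of whose components have dimension $k$, so $\deg \alpha_{z_0}$ is a sum of nonnegative contributions $\mathcal{G} \cdot \mathrm{pr}_2^*(H^{n-k})$ over its components $\mathcal{G}$, one of which is the graph of $g^{-1} \circ \iota_{z_0}$ (the actual limit of the birational transforms). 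Hence $\deg((g^{-1}\circ\iota_{z_0})_*(\mathcal{C}_{z_0})) \le \deg \alpha_{z_0} = a$, the generic value. Finally, standard Noetherian induction on the irreducible components of the jump locus $\{ s : \deg((g^{-1}\circ\iota_s)_*(\mathcal{C}_s)) \le a - 1 \}$, exactly as at the end of the proof of Lemma \ref{lSemiContDeg}, upgrades this to lower semi-continuity on all of $S$.

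The main obstacle I anticipate is the bookkeeping around the distinguished component of the fiber $\Gamma_s$: one must be careful that $\mathcal{C}_s$ is not contracted by $g^{-1}$ (guaranteed by the hypothesis that $g^{-1}\circ \iota_s$ is a well-defined birational map on each fiber) and that in the limit over a curve the proper transform of $\mathcal{C}_{z_0}$ really appears, with multiplicity one, as a component of the flat/Chow limit $(\Gamma_C)_{z_0}$ rather than being swallowed by excess components supported over $\mathrm{Bs}(g^{-1})$. This is the same technical point already handled in Lemma \ref{lSemiContDeg}, and the effectivity/irreducibility of the cycles in the family (so that degrees are genuinely nonnegative and additive over components) is what makes the "can only drop" conclusion go through; everything else is a routine transcription of that argument with $f$ replaced by $g^{-1}$ and the fixed class $H^k$ replaced by the varying family $\mathcal{C}$.
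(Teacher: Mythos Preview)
Your proposal is correct and follows essentially the same approach as the paper: construct the graph of $(g^{-1}\times\mathrm{id}_S)\circ\iota$ inside $X\times X\times S$, intersect with $p_2^*(H_S^{n-k})$ to get a $\dim(S)$-cycle $\alpha$, and apply Theorem~\ref{tConsNumber} to conclude that $\deg\alpha_s$ is constant while the actual birational-transform degree is bounded above by it. The paper's proof is terser, simply writing ``arguing as in the proof of Lemma~\ref{lSemiContDeg}'' for the curve-restriction step that you spell out in full, but the content is the same.
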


\begin{proof}
Again we employ Theorem \ref{tConsNumber}. Let $\mathcal{Y} = X \times X \times S$ with projections $p_1:=p_{13}\colon X\times X \times S \to X \times S$ and $p_2=p_{23} \colon X\times X \times S \to X\times S$. Moreover, for the fiberwise birational map $g^{-1} \times \mathrm{id}_S \colon X \times S \dashrightarrow X \times S$, consider the composite 
\[(g^{-1}\times\mathrm{id}_S)\circ \iota \colon \mathcal{C} \hookrightarrow X \times S ,\]
and (the closure in $\mathcal{Y}$ of) its graph $\Gamma_{(g^{-1}\times\mathrm{id}_S)\circ \iota}$.

Consider the cycle $\alpha=\Gamma_{(g^{-1}\times\mathrm{id}_S)\circ \iota} . p_2^* (H^{i}_S)$ on $\mathcal{Y}$, where $i$ is chosen appropriately such that $\alpha_s$ is a zero cycle for all $s$. Then arguing as in the proof of Lemma \ref{lSemiContDeg}, we find that for general $s$ in $S$, the degree of  $\alpha_s$ is equal to $ \deg ( (g^{-1}\circ i_s)_*(\mathcal{C}_s))$, whereas as special points it only gives an upper bound. Since the degree of $\alpha_s$ is constant by Theorem \ref{tConsNumber}, we get the assertion.
\end{proof}

\begin{proof}(of Theorem \ref{tSubMultiplicative})
Replacing $H^k$ by $mH^k$ we can assume $m=1$. Then $\delta_k (f)$ is the degree of the birational transform under $f^{-1}$ of a general element $H^k$, divided by $H^n$, and the same holds for $g$. In symbols: $\frac{f^{*}H^k.H^{n-k}}{H^n}=\delta_k(f)$.

On the other hand, for $\delta_k (f \circ g)$ we first consider the birational transform $\mathcal{C}_{s_0}$ under $f^{-1}$ of a general element $H^k$, and $\mathcal{C}_{s_0}$ is now a special element in $B^k (X)$. We then have to compute the degree of the birational transform under $g^{-1}$ of $\mathcal{C}_{s_0}$, and divide it by $H^n$, and this is $\delta_k (f \circ g)$. 

By hypothesis, we can deform $\mathcal{C}_{s_0}$ in the homological equivalence class of $\delta_k (f) H^k$ to a general element $\mathcal{C}_{s}$ in that class via the family $p\colon \mathcal{C} \to S$ whose existence is assumed in Theorem \ref{tSubMultiplicative}. Then the birational transform, for general $s\in S$, under $g^{-1}$ of $\mathcal{C}_s$ is in $\delta_k(g) \delta_k(f)H^k$ since $\mathcal{C}_s$ is $g$-adapted and Lemma \ref{lCycleTransformBirTransform} (2) holds. But the birational transform under $g^{-1}$ of $\mathcal{C}_{s_0}$ may lie in $d H^k$ with $d \leq \delta_k (g) \delta_k (f)$ by Lemma \ref{lCyclesDeg}. Thus Theorem \ref{tSubMultiplicative} follows.
\end{proof}

\begin{remark}\xlabel{rCubicFourfolds}
For example, Theorem \ref{tSubMultiplicative} is applicable for a birational map $g \colon X \dasharrow X$ of a very general cubic fourfold, and a reflection $f = \sigma_p\colon X \dasharrow X$ in a point $p\in X$; in this case, $\mathcal{C}_{s_0}$ is the birational transform under $f$ of an element which is general in $H^2$, for some very ample divisor $H$. Then we can find an irreducible family $\mathcal{C}$ with the required properties, taking as elements of $\mathcal{C}$ the birational transforms of $2$-cycles $A$  under $\sigma_q$, for $q$ varying in $X$ and $A$ varying in the  system of complete intersections of two hyperplanes $H^2$. 
\end{remark}

\section{Degrees of iterated birational transforms of surfaces in fourfolds}\xlabel{sItTrans}

\subsection{An iterative set-up}
When computing dynamical degrees, especially $\lambda_2$ on fourfolds, one frequently has to understand how the degrees of successive birational transforms of some general surface in the initial variety change. 

We therefore study the following set-up: let $X$ be a smooth projective fourfold with a very ample divisor $H$. Let $f\colon X \dasharrow X$ be a birational map. We want to compute the first and second Cremona degrees of the iterates of $f$ successively. Let $i\in \ZZ$ be the iteration index. 

To begin with, we let $S_0\subset X$ be a surface which is the intersection of two very general elements in $H$. In particular, we assume $S_0$ is smooth. Let
\[
s_0 \colon S_0 \, \hookrightarrow X
\]
be the inclusion. Moreover, we write $h_0\subset S_0$ for the intersection with $S_0$ of a very general element of $H$. Then $h_0$ defines a very ample class on $S_0$. Moreover, we also put $D_0:= h_0$.

Now suppose inductively that $S_i$ and a morphism $s_i\colon S_i \to X$, together with divisors $h_i, D_i \subset S_i$, have already been defined. We then define $S_{i+1}$ and $s_{i+1}$, with $h_{i+1}$ and $D_{i+1}$, in the following way. Consider the diagram
\[
\xymatrix{
                &  S_{i+1} \ar[rd]^{\widetilde{s}_i}\ar[ldd]_{\sigma_i}\ar@/^2pc/[rrdd]^{s_{i+1}} &                                                                                &    \\
                &                 & \Gamma_f\ar[rd]^{\pi_2}\ar[ld]^{\pi_1}             &    \\
S_i\ar[r]_{s_i}\ar@{-->}[rru]^{\bar{s}_i}     &  X\ar@{-->}[rr]^f    &                                                                    & X
}
\]

Here $\sigma_i$ is a sequence of blow-ups of $S$ in reduced points such that the rational map $f\circ s_i$ becomes a morphism $s_{i+1}$ on the blown-up surface $S_{i+1}$. We will describe more precisely how to construct $\sigma_i$ in a moment. Once this is done, we define 
\[
h_{i+1}= \sigma_i^* (h_i), \quad D_{i+1} = s_{i+1}^* (H)
\]
and we would like to compute the quantities
\[
d_2^{(i+1)}:= D_{i+1}^2 , \quad d_1^{(i+1)} := D_{i+1}.h_{i+1}
\]
in terms of data associated to the resolution map $\sigma_i$ and of $d_2^{(i)}, \: d_1^{(i)}$. Note that $d_2^{(i+1)}$ is the second Cremona degree (with respect to the chosen $H$) of $f^{i+1}$, and $d_1^{(i+1)}$ is its first Cremona degree. 

Let us now say how $\sigma_i$ is defined. The map $f$ can be given by a certain linear system; i.e., there is a line bundle $\mathcal{L}$ on $X$, a subspace of sections $V \subset H^0 (X, \mathcal{L})$ and the associated linear system $|V| = \PP (V) \subset \PP (H^0 (X, \mathcal{L}))$ of divisors defining $f$. Recall that there is an evaluation morphism for sections $\mathrm{ev}_V \colon V \otimes \mathcal{O}_X \to \mathcal{L}$ which determines a morphism $V \otimes \mathcal{L}^{\vee}\to \mathcal{O}_X$ whose image is called the base ideal
\[
\mathfrak{b}(|V|) \subset \mathcal{O}_X
\]
of the linear system $|V|$. The closed subscheme $\mathrm{Bs}(|V|)\subset X$ it defines is called the base scheme of the linear system $|V|$. 

Since $S_0$ is chosen very general, the image of $s_i \colon S_i \to X$ is not contained in $\mathrm{Bs}(|V|)$ for any $i$, and the composite rational map $f\circ s_i$ is defined by the linear system attached to the pull-back space of sections $s_i^*(V) \subset H^0 (S_i, s_i^*\mathcal{L})$. Its base scheme is defined by the inverse image ideal sheaf $s_i^{-1}(\mathfrak{b}(|V|))$ which we call $\mathcal{I}_i$ for the sake of explaining how to construct $\sigma_i$ with simpler notation. Let $Z_i\subset S_i$ be the closed subscheme which $\mathcal{I}_i$ defines. 

Now we construct $\sigma_i$ inductively as a composite of point blow-ups as follows. Put $\mathcal{I}^{(0)}_i := \mathcal{I}_i$, $Z_i^{(0)}:= Z_i$ and $S_i^{(0)}:= S_i$. We wish to construct a sequence of morphisms
\[
\xymatrix{
S_{i+1}:= S_i^{(N)} \ar[r]^{\quad \;\; \pi_{N-1}} & \ldots \ar[r]^{\pi_1} & S_i^{(1)} \ar[r]^{\pi_0} & S_i^{(0)}
}
\]
where each $\pi_{j} \colon S^{(j+1)}_{i} \to S_i^{(j)}$ is a blow-up in a finite set of reduced points in $S_i^{(j)}$. 

\textbf{Step 1.} 
Suppose we have already constructed $S_i^{(j)}$ together with $\mathcal{I}^{(j)}_i$, $Z_i^{(j)}$. Then if $Z^{(j)}_i$ is zero-dimensional, we put $Z^{(j)}_i (\mathrm{point}) := Z^{(j)}_i$. If $Z^{(j)}_i$ is one-dimensional, then we can write 
\[
Z^{(j)}_i = \mathfrak{D}^{(j)}_i \cup \mathfrak{E}^{(j)}_i
\]
where $\mathfrak{D}^{(j)}_i$ is the union of the one-dimensional isolated components in a primary decomposition of $\mathcal{I}^{(j)}_i$. These define a unique divisor $ \mathfrak{D}^{(j)}_i$ whereas $\mathfrak{E}^{(j)}_i$, which may contain embedded point components, is not unique: recall for example that locally around the origin in $\mathbb{A}^2$, the ideal $(xy,y^2)$ can be written in many ways as an intersection of primary ideals:
\[
(xy,y^2) = (y) \cap (x^2, xy, y^2, x+ \alpha y) , \quad \alpha \in \CC 
\]
and the embedded point component defined by any $(x^2, xy, y^2, x+ \alpha y)$ is not unique at all. Let $\mathcal{O} (- \mathfrak{D}^{(j)}_i)$ be the ideal sheaf of $\mathfrak{D}^{(j)}_i$, and put
\[
Z^{(j)}_i (\mathrm{point}) := V \left( \left(\mathcal{I}^{(j)}_i : \mathcal{O} (- \mathfrak{D}^{(j)}_i) \right) \right)
\]
which is a well-defined ideal sheaf with support in a finite set of points. Note that whereas the embedded point components are not well-defined, the quotient ideal sheaf $ \left(\mathcal{I}^{(j)}_i : \mathcal{O} (- \mathfrak{D}^{(j)}_i) \right)$ is and defines a certain zero-dimensional subscheme $Z^{(j)}_i (\mathrm{point})$. In the example above, it is just the reduced origin corresponding to $(x,y)$. 
\smallskip

\textbf{Step 2.}
Define $\pi_j \colon  S^{(j+1)}_{i} \to S_i^{(j)}$ as the blow-up of $S_i^{(j)}$ in the reduced subscheme (a finite set of points) whose support agrees with $Z^{(j)}_i (\mathrm{point})$. Let 
\[
Z^{(j+1)}_i:=\pi_j^{-1} (Z^{(j)}_i (\mathrm{point}) )
\]
be the scheme-theoretic preimage with ideal sheaf $\mathcal{I}_i^{(j+1)}$. In other words, if $\mathcal{I}^{(j)}_i (\mathrm{point})$ is the ideal sheaf of $Z^{(j)}_i (\mathrm{point})$, the subscheme $Z^{(j+1)}_i\subset S_i^{(j+1)}$ is defined by the ideal sheaf which is the image of $\pi_j^*(\mathcal{I}^{(j)}_i (\mathrm{point}))$ in $\mathcal{O}_{S^{(j+1)}_i}$. This is, by definition, the inverse image ideal sheaf $\pi_j^{-1}(\mathcal{I}^{(j)}_i (\mathrm{point}))$. It is not necessarily principal of course, since we have blown up in the reduced subscheme underlying $Z^{(j)}_i (\mathrm{point})$, and not in $Z^{(j)}_i (\mathrm{point})$ itself. 

At any rate, it is easy to compute $Z^{(j+1)}_i$ in local coordinates on the blown-up surface simply by substituting these in the original ideal downstairs. With $Z^{(j+1)}_i$ now go back to Step 1. The process terminates since the length of the scheme $Z^{(j)}_i (\mathrm{point})$ drops strictly in each step until it becomes the empty set. 

It is now easy to describe what $D_{i+1}$ is on $S_{i+1}= S^{(N)}_{i+1}$. In particular, this then allows us to compute $d_2^{(i+1)}= D_{i+1}^2 , \: d_1^{(i+1)} = D_{i+1}.h_{i+1}$. If $D$ is a divisor which lives on some surface in the tower
\[
\xymatrix{
S_{i+1}:= S_i^{(N)} \ar[r]^{\quad \;\; \pi_{N-1}} & \ldots \ar[r]^{\pi_1} & S_i^{(1)} \ar[r]^{\pi_0} & S_i^{(0)}
}
\]
then we denote the pull-back of $D$ to the top floor $S_{i+1}$ of the tower simply by putting a hat on it: $\hat{D}$. Then
\[
D_{i+1} = \hat{D}_i - \sum_j \hat{\mathfrak{D}}^{(j)}_i .
\]

\subsection{Geometry of a reflection on a cubic fourfold}
As an illustration let us prove a result about equality of dynamical degrees when $X$ is a very general cubic fourfold and $f$ is a certain suitably general composition of reflections in points on $X$. We will say what suitably general means below.

First we recall some facts about the geometry of a reflection $\sigma_p \colon X \dasharrow X$ on a smooth cubic fourfold in a very general point $p$. Compare \cite[Sect.\ 3]{BBS15} for these. A general line in $\PP^5$ through $p$ intersects $X$ in two points away from $p$, and $\sigma_p$ is the birational involution interchanging these points. One main fact we will use is that the birational self-map $\sigma_p$ lifts to an automorphism $\widetilde{\sigma}_p$ on a suitable model $\widetilde{X}$:
\[
\xymatrix{
\widetilde{X}\ar[d] \ar[r]^{\widetilde{\sigma }_p} & \widetilde{X} \ar[d]  \\
X \ar@{-->}[r]_{\sigma_p}  &   X.
}
\]
and we can construct $\widetilde{X}$ in two steps $\widetilde{X} \to X' \to X$, where each map is a blow-up in a certain smooth center. See Figure \ref{figure1} for a schematic picture, which we will explain now in some more detail. 

\begin{figure}
\caption{Geometry of the resolution of a reflection.}\label{figure1}
\begin{center}
\includegraphics[height=150mm]{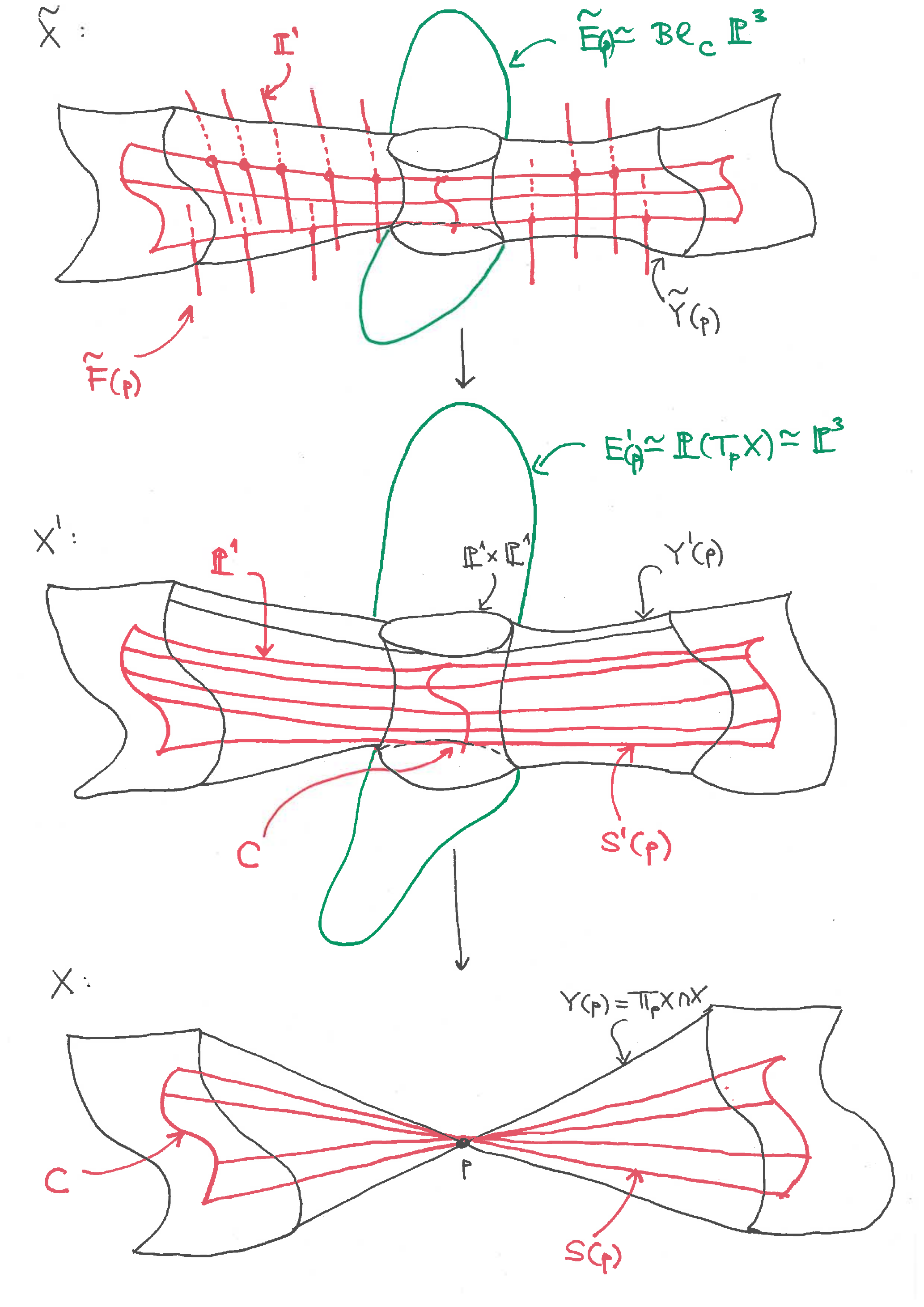}
\end{center}
\end{figure}

The embedded tangent hyperplane $\mathbb{T}_pX$ intersects $X$ in a cubic threefold $Y(p)$ with a node at $p$; the lines on $X$ through $p$ sweep out a surface $S(p)$ which coincides with the surface of lines on the nodal cubic threefold through $p$.  The tangent cone to $Y(p)$ in $p$ is a cone over a smooth quadric $Q \simeq \PP^1\times \PP^1$ in the hyperplane $\PP^3_{\infty} \subset \mathbb{T}_pX$, and $S(p)$  is a cone over a curve $C$ of bidegree $(3,3)$ in $Q$. The indeterminacy locus of $\sigma_p$ is $S(p)$, and $\sigma_p$ contracts $Y(p)$ to the point $p$. 

\begin{definition}\xlabel{dGoodPoint}
We call a point $p\in X$ \emph{good} if $C$ is a smooth curve.
\end{definition}

A general point $p$ will be good. To simplify, we will only consider reflections in good points in the sequel. Now $X' \to X$ is the blow-up of $X$ in $p$ with exceptional divisor $E'(p) \simeq \PP(T_pX) \simeq \PP^3$. Inside $E'(p)$ we retrieve $Q \simeq \PP^1\times \PP^1$ as the set of all tangent directions of smooth curve germs in $Y(p)$ through $p$, and  inside this, there is a copy of $C$, the set of directions of lines on $X$ through $p$.  The strict transform $Y'(p)$ of $Y(p)$ on $X'$ is the blow up of $Y(p)$ in the node, which gets replaced by $\PP^1\times \PP^1\simeq Q$. The strict transform $S'(p)$ of $S(p)$ on $X'$ is smooth because $p$ is a good point. 

Now $\widetilde{X} \to X'$ is the blow-up of $X'$ in $S'(p)$. We call the resulting exceptional divisor $\widetilde{F}(p)$. It is a $\PP^1$-bundle over $S'(p)$. The strict transform of $Y'(p)$ on $\widetilde{X}$ is called $\widetilde{Y}(p)$. The strict transform of $E'(p)$ on $\widetilde{X}$, denoted by $\widetilde{E}(p)$, is the blow-up of $E'(p)$ in $C$. We also denote by $\widetilde{H}$ the pull-back of a hyperplane section $H$ to $\widetilde{X}$ via the modification $\widetilde{X} \to X$. 

\begin{proposition}\xlabel{pFormulasReflection}
\begin{enumerate}
\item
The classes $\widetilde{H}, \widetilde{E}(p), \widetilde{F}(p)$ are a basis of $\mathrm{Pic}(\widetilde{X})$. Moreover, 
\begin{eqnarray}
\widetilde{Y}(p) \equiv \widetilde{H} - 2\widetilde{E}(p) - \widetilde{F}(p)
\end{eqnarray}
and 
\begin{eqnarray}
(\widetilde{\sigma}_p)^* (\widetilde{H}) \equiv & 2 \widetilde{H} - 3 \widetilde{E}(p) - \widetilde{F}(p)\\
(\widetilde{\sigma}_p)^* (\widetilde{E}(p)) \equiv & \widetilde{Y}(p)\equiv \widetilde{H} - 2\widetilde{E}(p) - \widetilde{F}(p) \\
(\widetilde{\sigma}_p)^* (\widetilde{F}(p)) \equiv & \widetilde{F}(p).
\end{eqnarray}
\item
We have
\begin{eqnarray}
(\widetilde{\sigma}_p)^* (\widetilde{H}) . \widetilde{Y}(p) \equiv & 0\\
\widetilde{H} . \widetilde{E}(p) \equiv & 0 .
\end{eqnarray}
\item
We have
\begin{eqnarray}
(\widetilde{\sigma}_p)^* (\widetilde{H})^2 \equiv & 2\widetilde{H}^2+3\widetilde{E}(p).\widetilde{E}(p) - \widetilde{F}(p).\widetilde{H} + \widetilde{E}(p).\widetilde{F}(p)\\
-\widetilde{Y}(p)^2 \equiv & \widetilde{H}^2 - 2(- \widetilde{E}(p)^2) - \widetilde{H}.\widetilde{F}(p) + \widetilde{E}(p).\widetilde{F}(p)
\end{eqnarray}
\item
If $h$ is any  divisor class on $\widetilde{X}$, then
\begin{eqnarray}
(\widetilde{\sigma}_p)^* (\widetilde{H}).h \equiv & 2 \widetilde{H}.h - 3 \widetilde{E}(p).h - \widetilde{F}(p).h\\
\widetilde{Y}(p).h \equiv & \widetilde{H}.h - 2\widetilde{E}(p).h - \widetilde{F}(p).h
\end{eqnarray}
\end{enumerate}
\end{proposition}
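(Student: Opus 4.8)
The statement is a collection of intersection-theoretic formulas on the explicit model $\widetilde{X}$, built as a two-step blow-up $\widetilde{X}\to X'\to X$. The strategy is to first establish that $\widetilde{H},\widetilde{E}(p),\widetilde{F}(p)$ form a basis of $\operatorname{Pic}(\widetilde{X})$ and to pin down the class of $\widetilde{Y}(p)$; then to compute the pull-back $(\widetilde{\sigma}_p)^*$ on this basis; and finally to deduce parts (2)--(4) as formal consequences by intersecting the identities of part (1) against each other and against arbitrary classes $h$, using only the ring structure of $A^*(\widetilde{X})$ together with a small number of vanishing facts coming from the geometry (e.g.\ that $\widetilde{\sigma}_p$ contracts $\widetilde{Y}(p)$ and that $E'(p)$ lies over the point $p\notin H$).

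**Key steps.** First I would record the standard facts about blow-ups: $\operatorname{Pic}(X')=\operatorname{Pic}(X)\oplus\ZZ E'(p)$ since $X'\to X$ is the blow-up of the smooth point $p$, and $\operatorname{Pic}(\widetilde{X})=\operatorname{Pic}(X')\oplus\ZZ\widetilde{F}(p)$ since $\widetilde{X}\to X'$ is the blow-up of the smooth center $S'(p)$ (here goodness of $p$ is used to guarantee $S'(p)$ is smooth); combined with $\operatorname{Pic}(X)=\ZZ H$ for a very general cubic fourfold, this gives the basis statement. Next, to get the class of $\widetilde{Y}(p)$: since $Y(p)=\TT_pX\cap X$ is a hyperplane section, $Y(p)\equiv H$ on $X$; the node at $p$ means $Y(p)$ has multiplicity $2$ there, so its strict transform on $X'$ is $H-2E'(p)$; then $Y(p)$ contains the surface $S(p)$ (it is swept out by the lines on $Y(p)$ through the node), and since $S(p)$ is reduced along $S'(p)$ the strict transform on $\widetilde{X}$ picks up $\widetilde{F}(p)$ once, giving $\widetilde{Y}(p)\equiv\widetilde{H}-2\widetilde{E}(p)-\widetilde{F}(p)$. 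The formula $(\widetilde{\sigma}_p)^*(\widetilde{F}(p))\equiv\widetilde{F}(p)$ holds because $\widetilde{F}(p)$ is the exceptional divisor over $S'(p)$, which is the fixed/indeterminacy data that $\widetilde{\sigma}_p$ preserves (it is an automorphism and $\widetilde{\sigma}_p$ exchanges the two sheets meeting along $\widetilde{F}(p)$, stabilizing it); the formula $(\widetilde{\sigma}_p)^*(\widetilde{E}(p))\equiv\widetilde{Y}(p)$ records that $\widetilde{\sigma}_p$ interchanges the exceptional divisor $\widetilde{E}(p)$ over $p$ with $\widetilde{Y}(p)$, which is exactly the content of ``$\sigma_p$ contracts $Y(p)$ to $p$'' lifted to $\widetilde{X}$. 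The remaining line $(\widetilde{\sigma}_p)^*(\widetilde{H})\equiv 2\widetilde{H}-3\widetilde{E}(p)-\widetilde{F}(p)$ is then forced: a general line through a general point of $\PP^5$ meets $X$ in $3$ points, the reflection sends the hyperplane system (degree $1$) to the system cut out by quadrics through... — more precisely, writing $(\widetilde{\sigma}_p)^*(\widetilde{H})\equiv a\widetilde{H}-b\widetilde{E}(p)-c\widetilde{F}(p)$, the coefficients are determined by computing local multiplicities of the linear system defining $\sigma_p$ along $p$ and along $S(p)$: $\sigma_p$ is given by the system of cubics singular at $p$ and vanishing on... giving $a=2$, and the multiplicity $2$ of the node together with the cone structure of $S(p)$ over the $(3,3)$-curve $C$ gives $b=3$, $c=1$. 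Alternatively one can determine $a,b,c$ purely formally from consistency: applying $(\widetilde{\sigma}_p)^*$ twice must give the identity since $\sigma_p^2=\operatorname{id}$, and imposing $(\widetilde{\sigma}_p)^*\circ(\widetilde{\sigma}_p)^*=\operatorname{id}$ on the basis, together with the two known images, determines the third; this is the cleanest route and I would take it. For parts (2)--(4): (4) is immediate by intersecting the first-part linear equivalences with an arbitrary $h$; (2) follows since $(\widetilde{\sigma}_p)^*(\widetilde{H})$ is represented by (the strict transform of) a general hyperplane section, which avoids the contracted locus, so its intersection with $\widetilde{Y}(p)$ — a contracted divisor — is $0$ by the projection formula applied through $\widetilde{\sigma}_p$, and $\widetilde{H}.\widetilde{E}(p)=0$ because $\widetilde{H}$ is pulled back from $X$ while $\widetilde{E}(p)$ lies over the point $p$ (a general hyperplane section misses $p$); (3) follows by squaring the relations $(\widetilde{\sigma}_p)^*(\widetilde{H})\equiv 2\widetilde{H}-3\widetilde{E}(p)-\widetilde{F}(p)$ and $\widetilde{Y}(p)\equiv\widetilde{H}-2\widetilde{E}(p)-\widetilde{F}(p)$, expanding in the ring $A^*(\widetilde{X})$ and substituting the vanishings $\widetilde{H}.\widetilde{E}(p)=0$, $\widetilde{H}.\widetilde{F}(p)\cdot(\text{nothing})$... — more carefully, expanding $(2\widetilde{H}-3\widetilde{E}-\widetilde{F})^2$ and using $\widetilde{H}.\widetilde{E}=0$ collapses the cross terms to leave precisely $2\widetilde{H}^2+3\widetilde{E}^2-\widetilde{F}.\widetilde{H}+\widetilde{E}.\widetilde{F}$ after also using $\widetilde{H}.\widetilde{F}(p)=\widetilde{F}(p).\widetilde{H}$ and collecting; similarly for $-\widetilde{Y}(p)^2$.

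**Main obstacle.** The genuinely delicate point is justifying the action of $(\widetilde{\sigma}_p)^*$ on the basis of $\operatorname{Pic}(\widetilde{X})$ — specifically that $\widetilde{\sigma}_p$ is a biregular automorphism of $\widetilde{X}$ (so that $(\widetilde{\sigma}_p)^*$ is a genuine ring automorphism, involutive, preserving intersection numbers) and that it acts on the three generators as claimed. This requires a careful local analysis of the resolution near $p$: understanding that blowing up $p$ and then $S'(p)$ separates the two branches of every line through $p$ that $\sigma_p$ swaps, so the involution becomes regular; and identifying which exceptional divisor maps to which. Once the involutivity is in hand, the coefficient $b=3$ (equivalently, the whole formula for $(\widetilde{\sigma}_p)^*\widetilde{H}$) can be forced algebraically from $((\widetilde{\sigma}_p)^*)^2=\operatorname{id}$ plus the images of $\widetilde{E}(p)$ and $\widetilde{F}(p)$, so I would lean on that rather than on a direct computation of base-locus multiplicities. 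Everything downstream in (2)--(4) is then routine expansion in $A^*(\widetilde{X})$ using the handful of vanishing relations, and I would not write those computations out in full. All of the geometric input — the structure of $Y(p)$, $S(p)$, the cone over the $(3,3)$-curve $C$, smoothness of $S'(p)$ for good $p$ — is exactly the content recalled from \cite{BBS15} in the paragraphs preceding the statement, so the proof is really a bookkeeping exercise organized around the involution identity.
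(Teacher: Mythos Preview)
Your treatment of parts (1), (2) and (4) is essentially the paper's: the paper simply cites \cite{BBS15} for (1), uses the projection formula through $\widetilde{\sigma}_p$ together with $\widetilde{H}.\widetilde{E}(p)\equiv 0$ for (2), and observes that (4) is immediate from the linear equivalences in (1). Your extra suggestion of forcing $(\widetilde{\sigma}_p)^*(\widetilde{H})$ from involutivity is a pleasant alternative and does work (one just has to discard the degenerate solutions $a=\pm 1$).

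There is, however, a genuine gap in your plan for part (3). Direct squaring does \emph{not} give the stated formulas using only $\widetilde{H}.\widetilde{E}(p)\equiv 0$. Writing $H,E,F$ for short, one has
\[
(2H-3E-F)^2 = 4H^2 + 9E^2 + F^2 - 12HE - 4HF + 6EF,
\]
and after setting $HE\equiv 0$ this is $4H^2 + 9E^2 + F^2 - 4HF + 6EF$, which is \emph{not} the asserted $2H^2 + 3E^2 - HF + EF$. The same failure occurs for $-\widetilde{Y}(p)^2$. What you are missing is precisely the \emph{first} vanishing in (2), namely $(\widetilde{\sigma}_p)^*(\widetilde{H}).\widetilde{Y}(p)\equiv 0$; expanded in the basis (with $HE\equiv 0$) this reads $2H^2 + 6E^2 + F^2 - 3HF + 5EF \equiv 0$, which is exactly the discrepancy between your direct expansion and the target.

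The paper's device is to build this second relation in from the start rather than to square blindly: since $(\widetilde{\sigma}_p)^*(\widetilde{H}).\widetilde{Y}(p)\equiv 0$ one writes
\[
(\widetilde{\sigma}_p)^*(\widetilde{H})^2 \equiv (\widetilde{\sigma}_p)^*(\widetilde{H}).\bigl((\widetilde{\sigma}_p)^*(\widetilde{H}) - \widetilde{Y}(p)\bigr) = (2H-3E-F)(H-E),
\]
and similarly $-\widetilde{Y}(p)^2 \equiv \bigl((\widetilde{\sigma}_p)^*(\widetilde{H}) - \widetilde{Y}(p)\bigr).\widetilde{Y}(p) = (H-E)(H-2E-F)$. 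Expanding these products and then using $HE\equiv 0$ gives the stated identities on the nose. So your approach to (3) can be repaired, but only by invoking the relation you omitted; the clean way to do so is the subtraction trick above.
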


\begin{proof}
Part (1) is straightforward and can be found in \cite[Sect.\ 3, Prop.\ 3.2 ff]{BBS15}. As for (2), note that by the projection formula
\[
(\widetilde{\sigma}_p)^* (\widetilde{H}) . \widetilde{Y}(p) \equiv \widetilde{H} . (\widetilde{\sigma}_p)_* (\widetilde{Y}(p)) \equiv \widetilde{H}.\widetilde{E}(p) \equiv 0
\]
since a general hyperplane section $H$ of $X$ does not meet $p$. 

For (3) we compute 
\begin{align*}
(\widetilde{\sigma}_p)^* (\widetilde{H})^2  & \equiv   (\widetilde{\sigma}_p)^* (\widetilde{H}). (\widetilde{\sigma}_p)^* (\widetilde{H}) - (\widetilde{\sigma}_p)^* (\widetilde{H}). \widetilde{Y}(p)\\
                                                          &  \equiv  (\widetilde{\sigma}_p)^* (\widetilde{H}) . ( (\widetilde{\sigma}_p)^* (\widetilde{H}) - \widetilde{Y}(p))\\
                                                          & \equiv  (2 \widetilde{H} - 3 \widetilde{E}(p) - \widetilde{F}(p) ) . (\widetilde{H} - \widetilde{E}(p))\\
                                                          & \equiv  2 \widetilde{H}.\widetilde{H} - 3 \widetilde{H}.\widetilde{E}(p) - \widetilde{H}.\widetilde{F}(p) - 2\widetilde{H}.\widetilde{E}(p) + 3 \widetilde{E}(p).\widetilde{E}(p) + \widetilde{F}(p).\widetilde{E}(p)\\
                                                          & \equiv  2\widetilde{H}.\widetilde{H}+3\widetilde{E}(p).\widetilde{E}(p) - \widetilde{F}(p).\widetilde{H} + \widetilde{E}(p).\widetilde{F}(p) 
\end{align*}
where one uses the second formula of (2), $\widetilde{H} . \widetilde{E}(p) \equiv 0$ in the last step. 
Now
\begin{align*}
-\widetilde{Y}(p)^2 & \equiv  (\widetilde{\sigma}_p)^* (\widetilde{H}) . \widetilde{Y}(p) - \widetilde{Y}(p)^2\\
                          & \equiv ((\widetilde{\sigma}_p)^* (\widetilde{H}) - \widetilde{Y}(p) ) . \widetilde{Y}(p)\\
                           & \equiv  (\widetilde{H} - \widetilde{E}(p)). (\widetilde{H} - 2\widetilde{E}(p) - \widetilde{F}(p))\\
                           & \equiv  \widetilde{H}^2 - 2\widetilde{H}.\widetilde{E}(p) - \widetilde{H}.\widetilde{F}(p) - \widetilde{E}(p).\widetilde{H}(p) + 2\widetilde{E}(p)^2 + \widetilde{E}(p).\widetilde{F}(p)\\
                          & \equiv  \widetilde{H}^2 + 2\widetilde{E}(p)^2 - \widetilde{H}.\widetilde{F}(p) + \widetilde{E}(p).\widetilde{F}(p).
\end{align*}                     
Finally, part (4) is clear from (5.1) and (5.2).
\end{proof} 

Now we pass to an iterative set-up again. Namely, consider a sequence of points $\{ p_i \}_{i =1, 2, \dots }$ on $X$ and the corresponding sequence of reflections $\{ \sigma_{p_i} \}_{i=1, \dots }$ 
Take a surface $S_0 \subset X$ which is the intersection of two very general elements in $H$, and let $h_0\subset S_0$ be the intersection of $S_0$ with a very general element in $H$. Thus $S_0$ and $h_0$ are, respectively, a smooth cubic surface and a smooth cubic curve on $X$. We want to study the successive birational transforms of $S_0$ and $h_0$ when we apply $\sigma_{p_1}$, then $\sigma_{p_2}$, $\sigma_{p_3}$ and so forth. In particular, we want to study the degrees of those birational transforms. Let
\[
d_1^{(i)}, \quad d_2^{(i)}
\]
be the degrees, respectively, of the birational transforms of $h_0$ resp.\ $S_0$ under $\sigma_{p_i}\circ \dots \circ \sigma_{p_1}$. To study iterates of a single map, let us assume that the sequence of points is \emph{periodic with period} $N$, i.e.
\[
p_{i+N} = p_i\quad \forall i.
\]
Let us also assume that all the points $p_i$ are \emph{good}. 
The growth rates of the degrees $d_1^{(i)}$ resp. $d_2^{(i)}$ for $i\to \infty$ are then the first resp.\ second dynamical degrees of the composite
\[
\sigma_{p_N} \circ \dots \circ \sigma_{p_1}. 
\]
To compute the degrees, we first define a sequence of auxiliary surfaces $S_i$ with morphisms $s_i \colon S_i \to X$ inductively as follows: for $i=0$, $S_0$ has already been defined, and $s_0$ is the inclusion into $X$. Suppose now $s_i \colon S_i \to X$ has been defined. Look at the commutative diagram
\begin{equation}
\xymatrix{
S_{i+1}\ar[dd]_{\pi_{i+1}}\ar[rd]^{\widetilde{s}_i}\ar@/^6pc/[rrdd]^{s_{i+1}} &  &  \\
&  \widetilde{X}_{i+1}\ar[d]^{\mu_{i+1}} \ar[r]^{\widetilde{\sigma }_{p_{i+1}}} & \widetilde{X}_{i+1} \ar[d]_{\mu_{i+1}}  \\
S_i \ar[r]^{s_i}\ar@{-->}[ru] & X \ar@{-->}[r]_{\sigma_{p_{i+1}}}  &   X.
}
\end{equation}
Here $\mu_{i+1} \colon \widetilde{X}_{i+1} \to X$ is the modification described above: blow-up the point $p_{i+1}$ on $X$ and then the strict transform of the surface of lines through $p_{i+1}$. The morphism $\pi_{i+1} \colon S_{i+1} \to S_i$ is a composite of blow-ups in reduced points, constructed in the way described at the beginning of this section, such that the diagonal dotted arrow becomes a morphism $\widetilde{s}_i$. Then $s_{i+1}$ is simply the composite
\[
s_{i+1} =  \mu_{i+1}\circ \widetilde{\sigma}_{p_{i+1}} \circ \widetilde{s}_i .
\]
Now, in the notation of Proposition \ref{pFormulasReflection}, we have divisor classes $\widetilde{H}_{i+1}, \widetilde{E}(p)_{i+1},$ $ \widetilde{F}(p)_{i+1}, \widetilde{Y}(p)_{i+1}$ and $(\widetilde{\sigma}_{p_{i+1}})^*(\widetilde{H}_{i+1})$ on $\widetilde{X}_{i+1}$ and we give names to their pull-backs via $\widetilde{s}_i$ to $S_{i+1}$:
\begin{eqnarray*}
D_{i+1}:= & (\widetilde{s}_i)^* ((\widetilde{\sigma}_{p_{i+1}})^*(\widetilde{H}_{i+1}) ) \\
E_{i+1}:= & (\widetilde{s}_i)^* (\widetilde{E}(p)_{i+1}) \\
F_{i+1}:= & (\widetilde{s}_i)^* ( \widetilde{F}(p)_{i+1}) \\
Y_{i+1}:=& (\widetilde{s}_i)^* ( \widetilde{Y}(p)_{i+1}).
\end{eqnarray*}
Note that by the commutativity of the diagram (5.11), the equality
\begin{align*}
(\widetilde{s}_i)^* (\widetilde{H}_{i+1})&=(\widetilde{s}_i)^* (\mu_{i+1}H)=\pi_{i+1}^*s_i^*H\\
&=\pi_{i+1}^*(\widetilde{s}_{i-1}^*\widetilde{\sigma}_{p_i}^*\mu_i^*H)=\pi_{i+1}^* (D_i) 
\end{align*}
holds, so we do not need a new name for $(\widetilde{s}_i)^* (\widetilde{H}_{i+1})$. Then the morphism $s_i \colon S_i \to X$ is defined by the linear system $|D_i|$, i.e. $s_i^*(H) = D_i$. Moreover, we make the following simplifying notational convention: if some divisor class $D$ lives on $S_j$, then for any $i > j$ we denote the pull-back of $D$ to $S_i$ via the composite
\[
\xymatrix{
S_i \ar[r]^{\pi_i} & \dots \ar[r]^{\pi_{j+1}} & S_j
}
\]
simply by the same letter $D$. Thus, for example, $h_0\subset S_0$ defines a class on each surface $S_i$ by pull-back, and we denote all of them by the same letter, where, in a given equation, the context makes it clear on which $S_i$ this holds. 

Now all the equations (5.1) to (5.10) give equations on any surface $S_{i+1}$ by pulling the divisors back via $\widetilde{s}_i$. However, we want to impose a certain genericity condition on the points $p_1, \dots , p_N$ such that these equations take a simpler form.

\begin{assumption}\xlabel{aPoints}
The $N$-periodic sequence of good points $p_i$ can be chosen such that for a very general $S_0$ and $h_0$, the following equations hold on any blown-up surface $S_{i+1}$:
\begin{eqnarray*}
F_{i+1}. D_i = 0 , \quad E_{i+1}.F_{i+1} = 0.
\end{eqnarray*}
\end{assumption}

In fact we will assume something a little bit stronger, which implies the previous assumption, but can be expressed more geometrically in terms of the successive birational transforms of $S_0$:
\begin{assumption}\xlabel{aPointsGeometric}
The $N$-periodic sequence of good points $p_i$ can be chosen such that for a very general $S_0$ and $h_0$, the surface $s_i(S_i) \subset X$ is $\sigma_{p_{i+1}}$-adapted in the sense Lemma \ref{lCycleTransformBirTransform} (2) except for the following phenomenon which we allow to cause failure of $\sigma_{p_{i+1}}$-adaptedness: for a point $p_{k}$ there can be a point $p_{\tau (k)}$ such that all points
\[
\sigma_{p_l}\circ \dots \circ \sigma_{p_{k+1}}(p_k), \quad k\le l \le \tau (k)-2
\]
land in the open set where $\sigma_{p_{l+1}}$ is a local isomorphism onto the image, and 
\[
p_{\tau (k)}= \sigma_{p_{\tau (k) -1}}\circ \dots \circ \sigma_{p_{k+1}}(p_k).
\]
Moreover, the birational transform $s_{\tau(k)-1}(S_{\tau(k)-1}) \subset X$ is such that its strict transform in $X'_{\tau (k)}$ (obtained from $X$ by blowing up the point $p_{\tau (k)}$) meets the exceptional divisor in a curve, $\Gamma_{\tau (k)}$ say, which \emph{does not coincide with} the curve $C_{\tau (k)}$ of directions of lines through $p_{\tau (k)}$. In fact, this condition ensures that $E_{\tau (k)}. F_{\tau (k)} =0$ on $S_{\tau (k)}$: this is clear if $\Gamma_{\tau (k)}$ and $C_{\tau (k)}$ are even distinct; but if they meet in only finitely many points, then, looking back at Figure 1, we find that $S_{\tau (k)}$ can be viewed as a blow-up $\pi\colon S_{\tau (k)} \to S'_{\tau (k)}$ of a surface $S'_{\tau (k)}$ such that $E_{\tau (k)}$ is a pull-back of a divisor on $S'_{\tau (k)}$ and $F_{\tau (k)}$ is exceptional for $\pi$.
\end{assumption}

Intuitively, we can express Assumption \ref{aPointsGeometric} by saying that we allow a curve in a certain birational transform of $S_0$ to be contracted to a reflection point $p_k$ at some stage of the iteration, and this point then wanders around, always staying in the domain of definition and away from the exceptional locus of the respective next reflection, until at some later time it gets mapped to $p_{\tau (k)}$ after applying $\sigma_{p_{\tau(k)-1}}$. We then also assume that the directions of smooth curve germs which lie on the birational transform $s_{\tau(k)-1}(S_{\tau(k)-1}) \subset X$ and which pass through $p_{\tau(k)}$ are not identical with the directions of lines through $p_{\tau (k)}$. 

In this way, we will always have $E_{i+1}.F_{i+1}=0$ under Assumption \ref{aPointsGeometric} as well as $F_{i+1}. D_i = 0$ since the birational transforms of $S_0$ intersect in points the surfaces of lines attached as indeterminacy loci to subsequent reflections. Thus Assumption \ref{aPointsGeometric} is stronger than Assumption \ref{aPoints}. Moreover, we then have the \emph{successor function} $\tau \colon \mathbb{N} \to \mathbb{N}\cup \{ \infty \}$, which associates to $k$ the value $\tau (k)$ if $p_k$ gets mapped to $p_{\tau (k)}$ after a while as in Assumption \ref{aPointsGeometric}, or is equal to $\infty$ if there is no successor (if the point never gets mapped unto another reflection point). 

For the time being we will not discuss whether the genericity assumption \ref{aPointsGeometric} can be satisfied by a certain configuration of points $p_1, \dots , p_N$ on $X$. Also, this is not so important from our point of view. Rather, we would like to demonstrate that one can prove that some property (P) of dynamical degrees forces special geometric configurations on $X$, or conversely, that for suitably general configurations, the dynamical degrees fail to have property (P), if the configuration is realizable. The following result is a sample for this.

\begin{theorem}\xlabel{tGeneralPointsEquality}
Suppose that $\{ p_i \}$ is an $N$-periodic sequence of good points on $X$ which satisfy Assumption \ref{aPointsGeometric}. Then the first and second dynamical degrees of the map
\[
\sigma_{p_N} \circ \dots \circ \sigma_{p_1}
\]
are equal.
\end{theorem}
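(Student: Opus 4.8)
Below is the plan I would follow.

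The plan is to work entirely on the auxiliary surfaces $S_i$ and to prove the sharper statement that $d_1^{(i)}=d_2^{(i)}$ for \emph{every} $i\ge 0$. First I would record the reduction: since $\{p_i\}$ is $N$-periodic, the $k$-th iterate of $g:=\sigma_{p_N}\circ\dots\circ\sigma_{p_1}$ is $\sigma_{p_{kN}}\circ\dots\circ\sigma_{p_1}$, so $\deg_j(g^k)=d_j^{(kN)}$ and hence $\lambda_j(g)=\lim_{k\to\infty}(d_j^{(kN)})^{1/k}$ for $j=1,2$; thus $d_1^{(i)}=d_2^{(i)}$ for all $i$ gives $\lambda_1(g)=\lambda_2(g)$ at once. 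The base case is immediate, since $D_0=h_0$ forces $d_1^{(0)}=d_2^{(0)}=h_0^2=3$, and I would run the inductive step by strong induction on $i$.

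For the inductive step I would extract recursions from Proposition \ref{pFormulasReflection}. Pulling formula (5.2) back along $\widetilde{s}_i$ in the diagram (5.11) gives $D_{i+1}=2D_i-3E_{i+1}-F_{i+1}$ on $S_{i+1}$, and pulling back formula (5.1) gives $Y_{i+1}=D_i-2E_{i+1}-F_{i+1}$. The class $F_{i+1}$ is supported on the exceptional locus of $\pi_{i+1}\colon S_{i+1}\to S_i$, because $\widetilde{F}(p)_{i+1}$ is contracted by $\mu_{i+1}$ onto $S(p_{i+1})$ and $s_i(S_i)$ meets $S(p_{i+1})$ in a finite set (Assumption \ref{aPointsGeometric}); as $h_0$ is pulled back from $S_0$ this gives $h_0.F_{i+1}=0$, and likewise $D_i.E_{i+1}=0$ by formula (5.6). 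Intersecting the recursion with $h_0$ yields $d_1^{(i+1)}=2d_1^{(i)}-3\,h_0.E_{i+1}$, while pulling formula (5.7) back along $\widetilde{s}_i$ and using $F_{i+1}.D_i=0=E_{i+1}.F_{i+1}$ (both valid under Assumption \ref{aPointsGeometric}) yields $d_2^{(i+1)}=2d_2^{(i)}+3\,E_{i+1}^2$. Writing $\delta_i:=d_1^{(i)}-d_2^{(i)}$, subtraction gives $\delta_{i+1}=2\delta_i-3\,E_{i+1}.(h_0+E_{i+1})$; in particular, once $\delta_j=0$ is known for $j\le i$, the recursion also yields $E_j.(h_0+E_j)=\tfrac13(2\delta_{j-1}-\delta_j)=0$ for all $j\le i$, and it remains to prove $E_{i+1}.(h_0+E_{i+1})=0$.

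If $i+1$ is not in the image of the successor function $\tau$, Assumption \ref{aPointsGeometric} says $s_i(S_i)$ is $\sigma_{p_{i+1}}$-adapted, so $p_{i+1}\notin s_i(S_i)$, hence $E_{i+1}=\widetilde{s}_i^*(\widetilde{E}(p)_{i+1})=0$ and there is nothing to show; by periodicity ($\tau(k+N)=\tau(k)+N$) this disposes of all but finitely many indices per period. If $i+1=\tau(k)$ (so $k\le i$), the curve $Y_k\subset S_k$ that $s_k$ contracts to $p_k$ is carried, along the chain of local isomorphisms recorded by $\tau$ — which preserve the local geometry at the wandering point — to a curve on $S_i$ contracted by $s_i$ to $p_{i+1}$; since $p_{i+1}$ is a good point and, by Assumption \ref{aPointsGeometric}, the germ directions $\Gamma_{\tau(k)}$ of $s_i(S_i)$ at $p_{i+1}$ differ from the line directions $C_{\tau(k)}$, this curve reappears inside $\widetilde{E}(p)_{i+1}$, so that $E_{i+1}=Y_k+\mathcal{E}$ with $Y_k$ pulled back from $S_k$ and $\mathcal{E}$ an exceptional correction with $h_0.\mathcal{E}=0$. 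From $Y_k=D_{k-1}-2E_k-F_k$, together with $h_0.F_k=0$ and formula (5.8) pulled back (giving $Y_k^2=-d_2^{(k-1)}-2E_k^2$), one gets $Y_k.(h_0+Y_k)=\delta_{k-1}-2\,E_k.(h_0+E_k)$, which vanishes by the strong inductive hypothesis (both $\delta_{k-1}=0$ and $E_k.(h_0+E_k)=0$, since $k-1\le i$). Hence $E_{i+1}.(h_0+E_{i+1})=Y_k.(h_0+Y_k)+\mathcal{E}.(2Y_k+\mathcal{E})=\mathcal{E}.(2Y_k+\mathcal{E})$, and the residual term vanishes because in the only delicate situation — when $\Gamma_{\tau(k)}$ meets $C_{\tau(k)}$ in finitely many points — $S_{\tau(k)}$ is, by Assumption \ref{aPointsGeometric}, a blow-up $S_{\tau(k)}\to S'_{\tau(k)}$ along which $E_{\tau(k)}$ is a pullback and $F_{\tau(k)}$ is exceptional. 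This closes the induction and gives $\lambda_1(g)=\lambda_2(g)$.

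The part I expect to be the real obstacle is the last computation at the indices $\tau(k)$: one must identify $E_{\tau(k)}$ precisely enough — tracking how the iterated birational transform of $S_0$ meets the exceptional divisor of $\sigma_{p_{\tau(k)}}$, and how the contracted curve $Y_k$ is propagated through the successor chain while staying away from the indeterminacy loci — to verify that the exceptional contributions really cancel. This is exactly where the genericity of the point configuration (Assumption \ref{aPointsGeometric}) is indispensable: without it the recursions for $d_1$ and $d_2$ decouple and there is no reason for the two dynamical degrees to coincide.
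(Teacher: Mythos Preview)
Your strategy coincides with the paper's: work on the surfaces $S_i$, show $d_1^{(i)}=d_2^{(i)}$ for every $i$, and extract this from the formulas of Proposition~\ref{pFormulasReflection} together with the vanishings $F_{i+1}.D_i=E_{i+1}.F_{i+1}=F_{i+1}.h_0=0$ supplied by Assumption~\ref{aPointsGeometric}. The paper packages the endgame more cleanly: it introduces the auxiliary quantities $t_1^{(i)}:=Y_i.h_0$ and $t_2^{(i)}:=-Y_i^2$ and observes that the pairs $(d_1^{(i)},t_1^{(i)})$ and $(d_2^{(i)},t_2^{(i)})$ satisfy the \emph{identical} linear recursion
\[
d_j^{(i+1)}=2d_j^{(i)}-3t_j^{(\tau^{-1}(i+1))},\qquad
t_j^{(i+1)}=d_j^{(i)}-2t_j^{(\tau^{-1}(i+1))}
\]
(with the convention $t_j^{(\tau^{-1}(i+1))}:=0$ when $\tau^{-1}(i+1)=\emptyset$) and the identical initial data $(3,0)$; equality of the two sequences is then immediate, with no strong induction needed. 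What makes this shortcut work is that the paper uses the geometric identification $E_{\tau(k)}=Y_k$ \emph{on the nose}, with no correction term.

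That identification is exactly where your argument has a gap. You write $E_{\tau(k)}=Y_k+\mathcal{E}$ and then try to kill $\mathcal{E}.(2Y_k+\mathcal{E})$. The justification you offer --- that Assumption~\ref{aPointsGeometric} factors $S_{\tau(k)}\to S'_{\tau(k)}$ with $E_{\tau(k)}$ a pullback and $F_{\tau(k)}$ exceptional --- is the argument for $E_{\tau(k)}.F_{\tau(k)}=0$; it says nothing about $\mathcal{E}.(2Y_k+\mathcal{E})$. Even granting $h_0.\mathcal{E}=0$ and $Y_k.\mathcal{E}=0$ (the latter because $Y_k$ is a total-transform pullback from $S_k$ and $\mathcal{E}$ would be exceptional over $S_k$), you are still left with $\mathcal{E}^2$, which has no reason to vanish. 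The correct move is the one the paper makes: under Assumption~\ref{aPointsGeometric} the only curve on $S_{\tau(k)-1}$ contracted by $s_{\tau(k)-1}$ to $p_{\tau(k)}$ is (the pullback of) $Y_k$, and the hypothesis $\Gamma_{\tau(k)}\neq C_{\tau(k)}$ guarantees that the lift $\widetilde{s}_{\tau(k)-1}$ carries it into $\widetilde{E}(p)_{\tau(k)}$ without acquiring extra components, so $E_{\tau(k)}=Y_k$ exactly and $\mathcal{E}=0$. Once you assert this, your $\delta$-recursion closes immediately; alternatively, adopt the paper's parallel-recursion formulation and the strong induction disappears altogether.
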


\begin{proof}
Under the hypotheses, we have the equations on $S_{i+1}$
\begin{eqnarray*}
D_{i+1}^2 = 2 D_{i}^2 + 3 E_{i+1}^2\\
-Y_{i+1}^2 =  D_i^2 - 2(-E_{i+1}^2)
\end{eqnarray*}
from (5.7) and (5.8), and from (5.9) and (5.10) 
\begin{eqnarray*}
D_{i+1}.h_0 = 2D_{i}.h_0 - 3 E_{i+1}.h_0\\
Y_{i+1}.h_0 = D_i.h_0 - 2E_{i+1}.h_0
\end{eqnarray*}
since $F_{i+1}.h_0 =0$ because $F_{i+1}$ lies over points in $S_0$. Note that these equations are valid universally, but $E_{i+1}$ may very well be zero, and indeed will be unless $i+1$ is of the form $\tau (k)$. Now note that
\[
d_2^{(i+1)}= D_{i+1}^2, \quad d_1^{(i+1)} = D_{i+1}.h_0
\]
and abbreviate 
\[
t_2^{(i+1)}:= -Y_{i+1}^2, \quad t_1^{(i+1)}:= Y_{i+1}.h_0.
\]
Moreover, note that under our geometric Assumption \ref{aPointsGeometric}, we have
\[
E_{\tau (k)} = Y_k. 
\]
Thus we get the recursions
\begin{eqnarray}
d_2^{(i+1)} = 2 d^{(i)}_2 - 3 t_2^{(\tau^{-1}(i+1))}\\
t_2^{(i+1)} =  d_2^{(i)} - 2 t_2^{(\tau^{-1}(i+1))}
\end{eqnarray}
where we agree that we put
\[
t_2^{(\tau^{-1}(i+1))} := 0 \; \mathrm{if} \; \tau^{-1}(i+1)= \emptyset 
\]
by definition. Also, 
\begin{eqnarray}
d_1^{(i+1)} = 2 d^{(i)}_1 - 3 t_1^{(\tau^{-1}(i+1))}\\
t_1^{(i+1)} =  d_1^{(i)} - 2 t_1^{(\tau^{-1}(i+1))}
\end{eqnarray}
Note that $\tau^{-1}(1) = \emptyset$.
Hence, the two sequences $d^{(i)}_1$ and $d^{(i)}_2$ are determined by the same set of recursions, starting from $d^{(0)}_1=d^{(0)}_2=3$, $t_1^{(0)}=t_2^{(0)}=0$, thus coincide. In particular, the first and second dynamical degrees of $\sigma_{p_N} \circ \dots \circ \sigma_{p_1}$ are equal. 
\end{proof}

\begin{remark}\xlabel{rRecursive}
In the set-up of Theorem \ref{tGeneralPointsEquality}, put 
\[
C:= \mathrm{l.c.m.}_{i=1, \dots , N} \{ N, \tau (1) -1, \tau (2) -2, \dots , \tau (N) -N \}.
\]
Then the recursions (5.11), (5.12) resp. (5.13), (5.14) show that there is a $2C\times 2C$ matrix $M$ (with constants as entries) such that for for the vectors
\begin{eqnarray*}
v_1^{(i)} = (d^{(C+iC)}_1 , \dots , d_1^{(1+iC)}, t_1^{(C+iC)}, \dots , t_1^{(1+iC)})\\
v_2^{(i)} = (d^{(C+iC)}_2 , \dots , d_2^{(1+iC)}, t_2^{(C+iC)}, \dots , t_2^{(1+iC)})
\end{eqnarray*}
we have
\[
v_1^{(i+1)} = Mv_1^{(i)}, \quad v_2^{(i+1)} = M v_2^{(i)}.
\]
This can be used to compute the dynamical degrees for concretely given successor functions $\tau$. Under certain generality assumptions, they will be equal to the spectral radius of $M$. 
\end{remark}

\section{Appendix: Inner product structures and generalized Picard-Manin spaces}\xlabel{sInnerProduct}

Another strong source of inequalities for dynamical degrees is the phenomenon of \emph{hyperbolicity}; Picard-Manin spaces and associated hyperbolic spaces have so far been studied mainly for divisors on surfaces by Cantat, Blanc et al., see also \cite[Sect.\ 2]{Xie15}, for a survey. We want to show that something similar can be done under much more general circumstances, using cycles of higher codimension and the Hodge-Riemann bilinear relations/the Hodge index theorem in higher dimensions. For definiteness, we will deal with fourfolds $X$ here only, and consider $H^2(X)$, i.e., codimension $2$ algebraic cycles modulo homological equivalence, on them. However, the inner product structures we will produce on our infinite-dimensional spaces will be more complicated than Euclidean or hyperbolic.

\medskip


For a smooth projective fourfold $X$ and a surjective birational morphism $\pi \colon Y \to X$ of another smooth projective fourfold $Y$ onto $X$, where we also assume that $\pi$ is a succession of blow-ups along smooth centers, we can consider the induced linear maps on cycle classes (which we take with real coefficients for convenience now)
\[
\pi_* \colon H_{\RR}^2(Y) \to H_{\RR}^2(X), \quad \pi^* \colon H_{\RR}^2(X) \to H_{\RR}^2(Y).
\]
We will write $Y \ge X$ and say that $Y$ dominates $X$.

\begin{definition}\xlabel{dCycleLimits}
The generalized Picard-Manin spaces are given as follows:
as a projective limit with respect to the push-forward maps
\[
H^2(X)_{\mathrm{proj}} := \varprojlim\limits_{Y \ge X} H_{\RR}^2(Y)
\]
or as an injective limit using the pull-back maps
\[
H^2(X)_{\mathrm{inj}} := \varprojlim\limits_{Y \ge X} H_{\RR}^2(Y).
\]
\end{definition}

The projection formula shows that there is an injection $H^2(X)_{\mathrm{proj}} \subset H^2(X)_{\mathrm{proj}}$; an analogous result holds for curves on surfaces, where the space constructed via the injective limit is sometimes called the space of Cartier classes on the Zariski-Riemann space (or Picard-Manin space), and the projective limit is called the space of Weil classes. We will work with the injective limit construction in the sequel.

Each of the spaces $H_{\RR}^2(Y)$ carries an inner product $(\cdot , \cdot )$, the (nondegenerate) \emph{intersection form}.  The pull-back maps $\pi^*$ are isometries for birational proper maps $Y \to X$. Hence $H^2(X)_{\mathrm{inj}}$ carries the structure of an infinite-dimensional inner product space $\mathbb{E}(X)$. The advantage of this is that any birational map $f \colon X \dasharrow X$ induces an \emph{isometry} $f^*_{\mathbb{E}(X)}$ of $\mathbb{E}(X)$: if an element $\alpha \in H^2(X)_{\mathrm{inj}}$ is represented by a class $\alpha_1 \in H^2(X_1)_{\RR}$ where $\pi\colon X_1\to X$ dominates $X$, then there is a model $p\colon \widetilde{X}_1 \to X$ such that $\widetilde{f} := \pi^{-1} \circ f \circ p \colon \widetilde{X}_1 \to X_1$ is a morphism. Then $\widetilde{f}^* (\alpha_1)$ represents the image under $f^*_{\mathbb{E}(X)}$ of $\alpha_1$ in $\mathbb{E}(X)$. This gives a well-defined map since for any two models we can find a third dominating both of them. Also 
\[
f^*_{\mathbb{E}(X)}\colon \mathbb{E}(X) \to \mathbb{E}(X)
\]
is clearly an isometry. 

\medskip

Note that for an ample class $h$ on $X$, the dynamical degrees of $f$ are given by the growth behavior of the inner products on $\mathbb{E}(X)$:
\[
( (f^*_{\mathbb{E}(X)})^n (h^k) , h^{4-k})
\]
so the dynamics of these isometries is important to study. 
\smallskip

Let us investigate the signature of the inner product/non-degenerate bilinear form on $\mathbb{E}(X)$ more concretely: first, for the $H^{2,2}$-part of the middle cohomology of a fourfold, we have the (orthogonal with respect to the intersection form) Lefschetz decomposition
\[
H^{2,2} = L^0 H^{2,2}_{\mathrm{prim}} \oplus L^1 H^{1,1}_{\mathrm{prim}} \oplus L^2 H^{0,0}_{\mathrm{prim}}.
\]
Here $L$ is the Lefschetz operator, and the subscript prim denotes primitive cohomology. 
Moreover, the intersection form on $L^r H^{a,b}_{\mathrm{prim}}$ is definite of signature $(-1)^a$, and $h^{a,b}_{\mathrm{prim}} = h^{a,b} - h^{a-1, b-1}$. Hence the signature on $H^{2,2}(X)$ of a fourfold $X$ is
\begin{equation}
h^{2,2}_{\mathrm{prim}} - h^{1,1}_{\mathrm{prim}} + h^{0,0}_{\mathrm{prim}} = h^{2,2} - 2 h^{1,1} + 2.
\end{equation}

In particular, if we start with a cubic fourfold, we obtain a positive intersection product on $H^{2,2}$. Now we start blowing up along points, curves and surfaces to obtain another model $Y \to X$, dominating $X$. Let us see how this affects the initial signature.

For a blow-up $\widetilde{X}_Z$ of $X$ in a smooth center $Z$ of codimension $r$ we have the decomposition of Hodge structures, see, for example, \cite[Thm.\ 7.31]{Voi03}:
\[
H^4 (\widetilde{X}_Z, \ZZ ) = H^4(X, \ZZ ) \oplus \bigoplus_{i=0}^{r-2} H^{4-2i-2}(Z, \ZZ ),
\]
where we shift the weights in the Hodge structure on $H^{4-2i-2}(Z, \ZZ )$ by $(i+1, i+1)$ to obtain a Hodge structure of weight $4$ (and endow the intersection form on it with a sign if we want the decomposition to be compatible with inner products). We also always have the equality $h^{1,1}(\widetilde{X}_Z) = h^{1,1}(X) +1$. Now suppose

\begin{enumerate}
\item
$Z$ is a point: then $h^{2,2}(\widetilde{X}_Z) = h^{2,2}(X) +1$, and by (4.1), the inner product, if we imagine it to be diagonalized over $\RR$ to be given by a matrix with $+1$'s and $-1$'s on the diagonal, changes by adding one copy of $-1$. 
\item
$Z$ is a curve $C$: then $h^{2,2}(\widetilde{X}_Z) = h^{2,2}(X) +h^{1,1}(C) + h^{0,0}(C) = h^{2,2}(X) +2$, and by (4.1) the inner product on the new vector space (which is two dimensions bigger) changes by adding one copy of $+1$ and one of $-1$.
\item
$Z$ is a surface: then $h^{2,2}(\widetilde{X}_Z) = h^{2,2}(X) +h^{1,1}(S)$, the new $H^{2,2}$ is bigger by $h^{1,1}(S)$ dimensions, and the inner product changes by adding $h^{1,1}(S)-1$ entries $+1$ and one entry $-1$. 
\end{enumerate}

We can also describe the process on our algebraic cycles $H^2(X)$ (Hodge classes, since the Hodge conjecture holds for a cubic fourfold) now: if $Y \to X$ is obtained by blowing up a point, then $H^2(Y)$ is one dimension bigger than $H^2(X)$ and the intersection form described by adding a $-1$ along the diagonal; if it is obtained by blowing up a curve, then $H^2(Y)$ is two dimensions bigger, and we add a $+1$ and a $-1$ along the diagonal for the new intersection product; if $Y$ is obtained by blowing up a surface, then $H^2(Y)$ is bigger by the Picard rank $\rho$ of the surface, and we add $\rho -1$ entries $+1$ and one entry $-1$ along the diagonal for the new intersection product.

Let $E\simeq \PP(\mathcal{N}_{Z/X})$ be the exceptional divisor and $p\colon E \to Z$ the induced map. The extra cycles in $H^2 (Y)$ can be described as the pull-backs via $p$ of (1) the point $Z$ if $Z$ is a point, (2) the curve $Z$ and a point on it if $Z$ is a curve, (3) curves on $Z$ if $Z$ is a surface, each time intersected with an appropriate power $H_E^i$ of the relative hyperplane class $H_E$ of the projective bundle $E$ to get an algebraic two-cycle. Thus, if $Z$ is a surface for example, we pull back curves on it to $E$, which is a $\PP^1$-bundle, to get surfaces on $Y$.  If $Z$ is a curve, $E$ is a $\PP^2$-bundle over it, and we get two extra surfaces as the class of a fiber and $H_E$ (pushed forward to $X$). For a point, we get a $\PP^3$-bundle $E$ over it, and one additional algebraic $2$-cycle, namely $H_E$, pushed forward to $X$. 

\smallskip

The situation is thus more complicated than the one with Picard-Manin spaces for surfaces, since there one can only blow up points, always adding $-1$'s to the intersection form, which makes the resulting limit into a hyperbolic space in the sense of Gromov. As we saw above, here we can be forced to add $+1$'s and $-1$'s, depending on whether we blow up points, curves or surfaces. Hopefully the fact that on $\mathbb{E}(X)$ we achieve some sort of algebraic stability for the map $f$ (taking iterates commutes with passing to associated maps on $\mathbb{E}(X)$), and the fact that we can describe the inner product geometrically in the above sense, can help to prove estimates for dynamical degrees in terms of the Cremona degrees in several cases.


\end{document}